\newcommand{\cA}{\mathcal{A}}
\newcommand{\cB}{\mathcal{B}}
\newcommand{\cF}{\mathcal{F}}
\newcommand{\cG}{\mathcal{G}}
\newcommand{\cN}{\mathcal{N}}
\newcommand{\bF}{\mathbb{F}}
\newcommand{\bG}{\mathbb{G}}
\newcommand{\bH}{\mathbb{H}}
\newcommand{\bOne}{\mathbbm{1}}
\newcommand{\bN}{\mathbb{N}}
\newcommand{\bR}{\mathbb{R}}
\newcommand{\bV}{\mathbb{V}}
\newcommand{\PP}{\boldsymbol{P}}
\newcommand{\EE}{\boldsymbol{E}}
\newcommand{\VV}{\boldsymbol{V}}
\DeclareMathOperator{\sgn}{sgn}
\newcommand{\prT}[1]{(#1_t, \, 0 \leq{t} \leq{T})}
\newcommand{\prTmenos}[1]{(#1_t, \, 0 \leq{t} <{T})}
\newcommand{\overbar}[1]{\mkern 1.5mu\overline{\mkern-1.5mu#1\mkern-1.5mu}\mkern 1.5mu}
\newcommand{\Ito}{It\^{o}\xspace}
\providecommand{\abs}[1]{\lvert#1\rvert}
\providecommand{\keywords}[1]
{
  \small	
  \textbf{\textit{Keywords---}} #1
}
\title{Anticipative information in a Brownian-Poisson market: the binary information}
\author{Bernardo D'Auria
\thanks{UC3M, Department of Statistics, 28911, Legan\'{e}s, Spain \& UC3M-BS, Institute of Financial Big Data, 28903 Getafe, Spain 
  (\email{bernardo.dauria@uc3m.es}).}
\and Jos\'{e} A. Salmer\'{o}n
\thanks{UC3M, Department of Statistics, 28911, Legan\'{e}s, Spain 
  (\email{joseantonio.salmeron@uc3m.es}).}
}
\numberwithin{equation}{section}
\newtheorem{theorem}{Theorem}[section]
\newtheorem{example}[theorem]{Example}
\newtheorem{lemma}[theorem]{Lemma}
\newtheorem{proposition}[theorem]{Proposition}
\newtheorem{definition}[theorem]{Definition}
\newtheorem{assumption}[theorem]{Assumption}
\newtheorem{remark}[theorem]{Remark}
\providecommand{\citep}[1]{(\cite{#1})}
\newcommand{\email}[1]{\href{mailto:#1}{#1}}
\pgfplotsset{compat=1.17}
\begin{document}
\maketitle
\begin{abstract}
The binary information collects all those events that may or may not occur. 
With this kind of variables, a large amount of information can be captured, in particular, about financial assets and their future trends. 
In our paper, we assume the existence of some anticipative information of this type in a market 
whose risky asset dynamics evolve according to a Brownian motion and a Poisson process.
Using Malliavin calculus and filtration enlargement techniques, 
we compute the semimartingale decomposition of the mentioned processes 
and, 
in the pure jump case, we give the exact value of the information. 
Many examples are shown, where the anticipative information is related to some conditions that the constituent processes or their running maximum may or may not verify.
\end{abstract}

\keywords{
Optimal portfolio; 
Malliavin calculus;
Clark-Ocone formula;
Insider information; 
Value of the information.
}

\maketitle

\section{Introduction}
In this paper we focus on the study of the presence of some anticipative information in a market composed of a bank bond and a risky asset,
the latter one driven by a Brownian motion 
$W=\prT{W}$ and a compensated Poisson process 
\hbox{$\tilde N = (N_t-\int_0^t\lambda_s ds,\,0\leq t\leq T)$} 
with positive intensity process
\hbox{$\lambda=\prT{\lambda}$}
adapted to the natural filtration generated by
a Poisson process \hbox{$N=\prT{N}$}.
In the optimal portfolio problem, 
a non-informed agent is looking for maximizing her expected logarithmic gains at the end of a trading period \hbox{$T>0$}, 
while playing with the natural information flow $\bF:=\{\cF_t\}_{0\leq t\leq T}$
with $\cF_t:=\sigma(W_s,N_s : 0\leq s \leq t)$.
She will be referred to as the $\bF$-agent.
In addition, we assume that there exists an agent who is informed about a binary random variable~$G$ containing some anticipative information about the path of~$W$ and/or~$N$, i.e., 
she knows if a certain future condition will happen or not. 
The anticipative filtration will be the initial enlargement~\hbox{$\bG := \bF\vee\sigma(G)$} and the agent playing with it will be referred to as the  $\bG$-agent.

Filtration enlargement is a stochastic calculus technique that allows modeling the incorporation of additional non-adapted information. 
It has been initially developed by~\cite{Jacod1985, Jeulin1980} with multiple applications, 
including insider trading or, more general, asymmetric information.
In the seminal paper \cite{AmendingerImkellerSchweizer1998}, 
it is shown that if the dynamics of the risky asset do not include the discontinuous part~$N$, 
then the additional gain is given by the entropy of the random variable~$G$.
After that, much progress has been made in the analysis of the additional information in the Brownian case, 
see~\cite{Amendinger2003, IMKELLER2000, baudoin2002} for the main references.
The research on the Poisson process in the initial enlargement framework started with \cite{ANKIRCHNER2008,Ankirchner11} in which the existence of a compensator is analyzed. 
Although they consider the entropy, 
the additional gain of an informed~\hbox{$\bG$-agent} in the optimal portfolio problem has not been studied.
In \cite{DiNunnoOksendalProske2006}, a similar framework is studied however they mainly focused on enlargements by the final value of the pair $(W_{T_0},N_{T_0})$ with $T_0\geq T$. 

Our main motivation is to get an expression for the investor's additional gains in a market whose risky asset dynamics depend only on a Poisson process,
in the same spirit of the analysis done in \cite{AmendingerImkellerSchweizer1998}.
This is achieved in Theorem \ref{theo.gain.pois}. 
We include various examples about additional information for this case.
Another novelty of our paper is in the kind of examples that we chose to model the additional information.
As it was done with respect to the Brownian motion case, we consider both 
$G = \bOne_{\{N_T\leq b\}}$ and \hbox{$G = \bOne_{\{b_1\leq N_T\leq b_2\}}$}, 
for some constants $b,b_1,b_2 \in~\bN$.
We work also with 
\hbox{$G = \bOne_{\{a_1\leq W_T\leq a_2\}\times\{b_1\leq N_T\leq b_2\}}$}, 
where the $\bG$-agent knows if the pair $(W_T,N_T)$ falls within a certain rectangle or not. 
However, as the main example we consider 
\hbox{$G = \bOne_{\{a_1\leq M_T\leq a_2\}\times\{b_1\leq J_T\leq b_2\}}$} being 
\hbox{$M_T:= \sup_{0\leq s \leq T} W_s$} and 
$J_T:= \sup_{0\leq s \leq T}\tilde N_s$ in which the $\bG$-agent knows whether the running maximum processes will be in a certain region or not.
For the majority of our computations, we use Malliavin calculus techniques, we suggest~\cite{DiNunnoOksendalProske2009} for a general overview on this variational calculus.
In~\cite{Privault2003} it was the first time in applying the Malliavin approach to the Poison process in order to prove the existence of conditional densities in some cases. 
In~\cite{Wright2018}, the enlargement of filtration was done without the \emph{Jacod hypothesis} but assuming some Malliavin regularity assumptions on the conditional densities.
Nowadays, the optimal portfolio problem with non-continuous assets is still a topic of research,
for example in~\cite{ChauRunggaldierTankov2018}, 
a weaker arbitrage condition is studied where the authors analyse the additional gains generated by an initial enlargement via super-hedging.

The paper is organised as follows. 
In Section \ref{sec:model-not} we describe the framework 
and we introduce the notation.
In Section~\ref{sec:initial-enlarg} we consider the purely jump market and we get the explicit expression of the compensator of the Poisson process for binary random variables. 
The main example of this section is about the terminal value of the Poisson process, that is, $G=\bOne_{\{N_T\in B\}}$ being $B$ an interval.
In Subsection~\ref{subsec:price}, we state Theorem~\ref{theo.gain.pois} in which we get a nice expression for the additional gain of an agent who plays with an initial enlarged filtration.
In Section~\ref{sec:Brow-Pois} we work in a Brownian-Poisson market, deducing similar equations for the compensators. 
The main examples considered in this subsection are~\hbox{$G=\bOne_{\{W_T\in A\}\times \{N_T\in B\}}$} 
and $G=\bOne_{\{M_T\in A\}\times \{J_T\in B\}}$ for $A,B$ some given intervals.
\section{Model and Notation}\label{sec:model-not}
Let $(\Omega, \cF_T,\PP,\bF)$ be a filtered probability space, where the filtration 
\hbox{$\bF = \{\cF_t\}_{0\leq t \leq T}$} is assumed to be complete and right-continuous. 
We assume that the agent is going to invest in a market composed by two assets in a finite horizon time $T>0$. 
The first one is a risk-less bond \hbox{$D=\prT{D}$}
and the second one is a risky stock \hbox{$S=\prT{S}$}. 
The dynamics of both are given by the following SDEs,
\begin{subequations}\label{def.assets}
\begin{align}
    \frac{dD_t}{ D_t} &=\rho_tdt\ ,\quad D_0 = 1\label{def.assets1}\\
    \frac{dS_t}{S_{t-}} &= \mu_tdt + \sigma_tdW_t + \theta_t \left(dN_t-\lambda_t dt\right)\ ,\quad S_0 = s_0 > 0 \ ,\label{def.assets2}
\end{align}
\end{subequations}
where $W=\prT{W}$ is a Brownian motion 
with $\bF^{W}$ its natural filtration and\break
$N = \prT{N}$ is a Poisson process with positive stochastic intensity \hbox{$\lambda = \prT{\lambda}$} adapted to its natural filtration $\bF^{N}$.
Moreover, we assume that 
$$ \int_0^T \lambda_s ds < +\infty\ ,\quad \PP\text{-almost surely}\ . $$
The filtration $\bF:=\{\cF_t\}_{0\leq t\leq T}$ is generated by the Brownian motion and the Poisson process, and it is augmented by the zero $\PP$-measure sets, $\cN$:
$$\cF_t := \sigma (W_s ,\ N_s:\ 0\leq s \leq t)\vee\mathcal{N}\ .$$
By $\EE$ and $\VV$ we refer the expectation and the variance operators of a given random variable under the measure $\PP$. 
Given a $\sigma$-algebra $\cF$, by~$\EE[\cdot\vert\cF]$ and $\VV[\cdot\vert\cF]$ 
we denote the conditional expectation and the conditional variance.
We define the space $L^2(\Omega,\cF_T,\PP)$,
or simply $L^2(\PP)$ when~\hbox{$(\Omega,\cF_T)$} is clear,
as the set of random variables with finite second moment,
$$ L^2(\PP) = \{F : \EE[F^2] <+ \infty \} \ .$$
We define $L^2(\Omega,\cF_T,dt\times\PP,\bF)$, or simply
$L^2(dt\times\PP)$, as the space of all 
\hbox{$\bF$-adapted} processes,
$$ L^2(dt\times\PP) =  \Big\{X = \prT{X} : \int_0^T \EE\left[X_s^2\right] ds  <+ \infty \Big\}\ . $$
About the market coefficients, in \eqref{def.assets1} and \eqref{def.assets2} we assume that they are 
càglàd~\hbox{$\bF$-adapted} processes that satisfy the following integrability condition
\begin{equation}\label{market.coef.integr}
     \EE\left[\int_0^T\left(\abs{\rho_s}+\abs{\mu_s}+\sigma_s^2+\theta_s^2\right) ds\right] <+\infty\ .
\end{equation}
Using the previous set-up, it is assumed that an agent can control her portfolio by a \textit{self-financing} process $\pi=\prT{\pi}$,
with the aim to maximize her expected logarithmic gains at the finite horizon time.
We denote by~\hbox{$X^\pi=\prT{X^\pi}$} a positive process to model the wealth of the portfolio of the investor under the strategy~$\pi$.
The dynamics of the wealth process are given by the following SDE, for $0 \leq t \leq T$,
\begin{equation}\label{def.X}
    \frac{dX_t^{\pi}}{X_{t-}^{\pi}} = (1-\pi_t) \frac{dD_t}{D_t} + \pi_t \frac{dS_t}{S_{t-}} \ ,\quad X^{\pi}_0=x_0 > 0\ ,
\end{equation}
and by using the evolution of both assets given in \eqref{def.assets} we get
\begin{align*}
    \frac{dX^{\pi}_t}{X^{\pi}_{t-}} =& (1-\pi_t) \rho_tdt + \pi_t \left( \mu_tdt + \sigma_t dW_t+\theta_t\left(dN_t-\lambda_t dt\right) \right)\ ,\quad X^{\pi}_0=x_0\ ,
\end{align*}
where the SDE is well-defined on the probability space $(\Omega, \cF_T,\PP,\bF)$.
Before giving a proper definition of the set of processes $\pi$ that we consider, we look for the natural conditions they should satisfy.
Applying the \Ito formula to the dynamics
of the risky asset given by \eqref{def.assets2}, 
we get an explicit solution as follows, 
\begin{align*}
    \ln \frac{S_t}{s_0} =& \int_0^t \left(\mu_s -\frac{1}{2}\sigma^2_s+ \lambda_s(\ln(1+\theta_s) -\theta_s)\right)ds\\
    &+ \int_0^t \sigma_sdW_s + \int_0^t\ln(1+\theta_s)(dN_s-\lambda_s ds) \ ,
\end{align*}
and to ensure that the process $S$ is well-defined, 
we shall assume that,
\begin{equation}\label{hyp1}
    -1<\theta_t\ ,\quad dt\times d\PP-\text{almost surely}.
\end{equation}
If we apply the \Ito formula to the wealth process we get,
\begin{align}\label{solution.X}
\ln \frac{X^{\pi}_t}{x_0} =& \int_0^{t}\left( \rho_s + \pi_s (\mu_s-\rho_s) - \frac{1}{2}\pi^2_s\sigma^2_s +\lambda_s(\ln(1+\pi_s\theta_s)-\pi_s\theta_s)\right)ds\notag \\
&+\int_0^{t}\pi_s\sigma_sdW_s+ \int_0^{t} \ln(1+\pi_s\theta_s) (dN_s-\lambda_s ds)\ ,
\end{align}
provided that these integrals are well-defined. 
To ensure this, we assume the following integrability condition,
\begin{equation}\label{hyp2}
        \EE\left[ \int_0^T\left(\abs{\rho_s}+\abs{\pi_s}\abs{\mu_s-\rho_s}+\pi^2_s\sigma^2_s + \pi^2_s\theta^2_s\right)  ds \right] < +\infty\ .
\end{equation}
Then, in order to guarantee that \hbox{$X^{\pi}$} is well-defined, 
we impose that for all $\pi$ there exists $\epsilon^\pi>0$ such that,
\begin{equation}\label{hyp3}
1+\pi_t\theta_t> \epsilon^{\pi}\ ,\quad 0\leq t\leq T\ .
\end{equation}
We assume that the market coefficients satisfy the following relation in order to assure non-infinite expected gains,
\begin{equation}\label{hyp4} 
    \frac{\mu_t-\rho_t}{\theta_t} < 0\ ,\quad dt\times d\PP-\text{almost surely}.
\end{equation}
Now, we can properly define the optimization problem as the supremum of the expected logarithmic gains of the agent's wealth at the finite horizon time $T$.
\begin{equation}\label{optimization.problem}
    J(x_0,\pi):= \EE\left[\ln X^{\pi}_T\vert X^\pi_0 = x_0\right]\ ,\quad \bV_T^{\bH}:= \sup_{\pi\in\cA(\bH)} J(x_0,\pi)\ ,\quad \bF\subseteq\bH\ .
\end{equation}
Finally we give the definition of the set $\cA(\bH)$ 
made of all admissible strategies for the \hbox{$\bH$-agent}, that is the one playing with information flow $\bH \supseteq \bF$.
\begin{definition}
In the financial market \eqref{def.assets1}-\eqref{def.assets2}, 
we define the set of admissible strategies $\cA(\bH)$ as the set of portfolio processes 
$\pi= \prT{\pi}$ càglàd and adapted with respect to the filtration $\bH$ 
which satisfy the conditions \eqref{hyp1}, \eqref{hyp2}, \eqref{hyp3} and \eqref{hyp4}.
\end{definition}
In the following statement we summarize the results about the optimal portfolios in markets with Brownian noise, Poisson noise or both. 
They can be found in Example~16.22 and Theorem~16.59 of the monography~\cite{DiNunnoOksendalProske2006} 
where it is assumed that the intensity process satisfies $\lambda_t = \lambda > 0,$ $\forall t\in [0,T]$.
\begin{proposition}
If $\sigma_t\neq 0,\,\theta_t = 1$ $dt\times\PP$-almost surely, then the 
optimal strategy for the problem \eqref{optimization.problem} with the information flow $\bF$ satisfies
\begin{equation}
    \pi^*_t = \frac{\mu_t-\rho_t-\sigma_t^2-\lambda +\sqrt{(\mu_t-\rho_t-\sigma_t^2-\lambda)^2+4\sigma^2_t(\mu_t-\rho_t)}}{2\sigma^2_t}\ .
\end{equation}
Moreover, if we have that $\sigma_t\neq 0$ and $\theta_t=0$ $dt\times\PP$-almost surely, then we recover the classic Merton problem with the optimal strategy satisfying the relation
\begin{equation}
    \pi_t^* = \frac{\mu_t-\rho_t}{\sigma_t^2}\ .
\end{equation}
Finally, if we have that $\sigma_t = 0$, $\theta_t\neq 0$ and $\lambda\theta_t\neq \mu_t-\rho_t$ $dt\times\PP$-almost surely, then the optimal strategy is given by
\begin{equation}\label{merton.poisson}
    \pi_t^* = \frac{\mu_t-\rho_t}{\lambda\theta_t^2-\theta_t(\mu_t-\rho_t)}\ .
\end{equation}
\end{proposition}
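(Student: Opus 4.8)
The plan is to start from the closed form \eqref{solution.X} of $\ln(X^{\pi}_T/x_0)$ and take expectations. Under any admissible $\pi$, the integrability condition \eqref{hyp2} makes $\int_0^{\cdot}\pi_s\sigma_s\,dW_s$ a true martingale, and together with \eqref{hyp3} (which keeps $\ln(1+\pi_s\theta_s)$ bounded below) and again \eqref{hyp2}, the compensated integral $\int_0^{\cdot}\ln(1+\pi_s\theta_s)(dN_s-\lambda\,ds)$ is also a mean‑zero martingale; here one uses $\lambda_t\equiv\lambda$. Hence, writing $a_s\defeq\mu_s-\rho_s$,
\begin{equation*}
J(x_0,\pi)=\ln x_0+\EE\!\left[\int_0^T\!\Big(\rho_s+\pi_s a_s-\tfrac12\pi_s^2\sigma_s^2+\lambda\big(\ln(1+\pi_s\theta_s)-\pi_s\theta_s\big)\Big)\,ds\right].
\end{equation*}
By Fubini's theorem, maximizing $J$ over $\cA(\bF)$ reduces to maximizing, for $dt\times d\PP$‑almost every $(s,\omega)$, the scalar function $g_s(\pi)\defeq \pi a_s-\tfrac12\pi^2\sigma_s^2+\lambda(\ln(1+\pi\theta_s)-\pi\theta_s)$ over the set $\{1+\pi\theta_s>0\}$.

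Since $g_s''(\pi)=-\sigma_s^2-\lambda\theta_s^2/(1+\pi\theta_s)^2\le 0$, the map $g_s$ is concave, so a critical point is automatically a global maximizer; the first‑order condition is
\begin{equation*}
g_s'(\pi)=a_s-\pi\sigma_s^2-\frac{\lambda\pi\theta_s^2}{1+\pi\theta_s}=0 .
\end{equation*}
It then remains to solve this equation in the three regimes. For $\theta_s=0$ it is linear and gives $\pi_t^\ast=a_t/\sigma_t^2$, the Merton value. For $\sigma_s=0$, clearing the denominator turns it into $a_s(1+\pi\theta_s)=\lambda\pi\theta_s^2$, whence $\pi_t^\ast=a_t/(\lambda\theta_t^2-\theta_t a_t)$, i.e.\ \eqref{merton.poisson}. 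For $\theta_s=1$ and $\sigma_s\neq0$, multiplying by $(1+\pi)$ yields the quadratic $\sigma_s^2\pi^2-(a_s-\sigma_s^2-\lambda)\pi-a_s=0$, and the quadratic formula produces the two candidates, of which one selects the $+$ root, giving exactly the displayed expression.

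\textbf{Main obstacle.} The only genuinely delicate point is the verification that each candidate is admissible, i.e.\ lies in $\cA(\bF)$: it is a deterministic function of the càglàd $\bF$‑adapted market coefficients, hence càglàd and $\bF$‑adapted, and conditions \eqref{hyp1}, \eqref{hyp2}, \eqref{hyp4} are immediate, but \eqref{hyp3} requires a short computation. In the pure‑jump case one finds $1+\pi_t^\ast\theta_t=\lambda\theta_t/\big(\lambda\theta_t-a_t\big)$, which is strictly positive precisely because of the sign hypothesis \eqref{hyp4}; and in the case $\theta_t=1$ one computes $1+\pi_t^\ast=\big(a_t+\sigma_t^2-\lambda+\sqrt{D_t}\big)/(2\sigma_t^2)$ with $D_t\defeq(a_t-\sigma_t^2-\lambda)^2+4\sigma_t^2 a_t$, and since $D_t-(a_t+\sigma_t^2-\lambda)^2=4\sigma_t^2\lambda>0$ this is positive for the $+$ root while the analogous inequality shows $1+\pi_t<0$ for the $-$ root, so the choice of sign is forced and yields the unique admissible maximizer. (One also notes $D_t=\big(a_t+\sigma_t^2-\lambda\big)^2+4\sigma_t^2\lambda\ge 0$, wait: more simply $D_t=\big(a_t+(\sigma_t^2-\lambda)\big)^2+4\sigma_t^2\lambda\ge0$, so the root is real.) Feeding the pointwise maximizer back into the integral then gives $\bV_T^{\bF}=J(x_0,\pi^\ast)$ by the concavity/measurable‑selection argument, completing the proof.
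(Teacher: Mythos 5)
Your argument is correct, but it is not the route the paper takes: the paper gives no proof at all for this proposition, simply citing Example 16.22 and Theorem 16.59 of the Di Nunno--{\O}ksendal--Proske monograph, and when it later proves the analogous statements under the enlarged filtration (Proposition \ref{pi.pois} and the mixed-market proposition) it uses a first-variation/perturbation argument, perturbing $\pi$ by $\epsilon\beta$ and imposing $I'(0)=0$. Your approach instead takes expectations in \eqref{solution.X}, kills the two stochastic integrals (via \eqref{hyp2} for the Brownian part and \eqref{hyp2}--\eqref{hyp3} for the compensated Poisson part), and reduces \eqref{optimization.problem} to the pointwise maximization of the concave scalar function $g_s(\pi)=\pi a_s-\tfrac12\pi^2\sigma_s^2+\lambda(\ln(1+\pi\theta_s)-\pi\theta_s)$; this is more elementary and buys you global optimality for free from $g_s''\le 0$, whereas the perturbation argument only produces the same first-order condition and must check second-order behaviour separately. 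Your case analysis is right: the first-order condition $a_s-\pi\sigma_s^2-\lambda\pi\theta_s^2/(1+\pi\theta_s)=0$ gives the Merton ratio when $\theta_s=0$, gives \eqref{merton.poisson} when $\sigma_s=0$ (with $1+\pi^*\theta_t=\lambda\theta_t/(\lambda\theta_t-a_t)>0$ by \eqref{hyp4}), and for $\theta_s=1$ yields $\sigma_s^2\pi^2-(a_s-\sigma_s^2-\lambda)\pi-a_s=0$, whose discriminant identity $D_t=(a_t+\sigma_t^2-\lambda)^2+4\sigma_t^2\lambda$ correctly forces the $+$ root as the unique candidate with $1+\pi>0$. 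The only small overstatement is calling admissibility of the candidate ``immediate'': condition \eqref{hyp2} for, say, the Merton strategy requires $\EE\int_0^T (\mu_s-\rho_s)^2/\sigma_s^2\,ds<\infty$, which does not follow from \eqref{market.coef.integr} alone and is an implicit regularity assumption on the coefficients; the paper glosses over the same point, so this is a remark rather than a gap.
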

Let $G\in\cF_T$ be a real valued random variable modeling some additional information. 
We introduce the filtration $\bG=\{\cG_t\}_{0\leq t\leq T}$ 
under which the privileged information is accessible since the beginning time $t=0$, that is
\begin{equation}\label{bG.def}
    \cG_t = \bigcap_{s>t}\left( \cF_s \vee \sigma(G) \right)\ ,
\end{equation}
We denote with $\PP^G$ the distribution of $G$, i.e.,
\hbox{$\PP^G(\cdot) = \PP(G\in\cdot)$} 
on $\sigma(G)$ and by $\PP^G(\cdot\vert \cF) = \PP(G\in\cdot\vert \cF)$ 
the corresponding conditional probability with respect to a given~\hbox{$\sigma$-algebra} $\cF$.
In order to assure that any $\bF$-semimartingale is also a~\hbox{$\bG$-semimartingale},
that is known in the literature as the \emph{hypothesis (H')}, see~\cite{JeulinYor1978},
for the rest of this paper we state the following standing assumption, known as  the \emph{Jacod hypothesis}.
\begin{assumption}
The conditional distributions $\PP^G(\cdot\vert \cF_t)$ for $t\in[0,T)$
almost surely verify the following absolutely continuity condition with respect to $\PP^G$,
$$\PP^G(\cdot\vert \cF_t) \ll \PP^G\ 
.$$
\end{assumption}
This assumption assures the existence of a jointly measurable process \hbox{$p^g = \prTmenos{p^g}$}, 
with $g\in Supp(G)$ such that $\PP(A\vert \cF_t) = \int_A p^g_t \PP^G(dg)$ for any $A\in\sigma(G)$.
In particular, 
\begin{equation*}
    p_t^g =  \frac{\PP^G(dg\vert \cF_t)}{\PP^G(dg)}\ ,\quad 0\leq t<T\ . 
\end{equation*}
In Theorem~2.5 of \cite{Jacod1985} it
is proven that the \emph{Jacod hypothesis} implies the \emph{hypothesis (H')}.
The main results of next section are stated for binary random variables that satisfy the former hypothesis.
As the process~$p^g$ is an~\hbox{$\bF$-martingale} for any $g\in Supp(G)$, see Lemma~2.1 in~\hbox{\cite{AmendingerImkellerSchweizer1998}}, then 
\hbox{$\langle X,p^g\rangle^\bF=\prTmenos{\langle X,p^g\rangle^\bF}$}
is well-defined for any $\bF$-local martingale $X$.
We define,
$$ \langle X,p^G\rangle_s^\bF := \langle X,p^\cdot\rangle_s^\bF\circ G \ .  $$
\begin{proposition}\label{prop.jacod.semimartingale}
Let $X=\prT{X}$ be an $\bF$-local martingale and let~$G$ be an \hbox{$\cF_T$-measurable} random variable satisfying the Jacod hypothesis. 
Then,
\begin{equation}\label{Jacod.semimartingale}
    \widehat{X}_t = X_t - \int_0^t \frac{d\langle X,p^G\rangle_s^\bF}{p^G_{s-}}\ ,\quad 0\leq t \leq T\ ,
\end{equation}
is a $\bG$-local martingale.
\end{proposition}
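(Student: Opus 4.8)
The plan is to reduce this statement, which lives on the enlarged filtration $\bF^{\sigma(G)}$, to a statement purely about $\bF$-martingales. Writing $\widehat{X}^{(g)}_t := X_t - \int_0^t (p^g_{s-})^{-1}\,d\langle X,p^g\rangle^\bF_s$ for the process appearing in \eqref{Jacod.semimartingale}, so that $\widehat{X}=\widehat{X}^{(G)}$, the target is that $\widehat{X}$ is an $\bF^{\sigma(G)}$-local martingale, and the bridge will be the classical fact (see \cite{Jacod1985}) that this holds once $\widehat{X}^{(g)}p^g$ is an $\bF$-local martingale for $\PP^G$-almost every $g$. Two preliminary facts drive everything: (i) for each fixed $g\in\text{Supp}(G)$ the process $(p^g_t)_{0\le t<T}$ is a nonnegative right-continuous $\bF$-martingale — immediate from $p^g_t=\PP^G(dg|\cF_t)/\PP^G(dg)$ and the tower property — and one may choose a version jointly measurable in $(g,\omega,t)$ with $p^g_{s-}>0$ on the interval over which \eqref{Jacod.semimartingale} is read, so that $\widehat{X}$ is well defined; and (ii) the Jacod hypothesis yields the disintegration $\PP^G(dg\mid\cF_t)=p^g_t\,\PP^G(dg)$, which plays the role that independence of $G$ from $\cF_t$ would play if it held.

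The analytic core is to show that for each fixed $g$ the product $\widehat{X}^{(g)}p^g$ is an $\bF$-local martingale (reducing if needed to the locally square-integrable case by localization, so that $\langle X,p^g\rangle^\bF$ is available). I would obtain this by integration by parts. Setting $A^g_t:=\int_0^t (p^g_{s-})^{-1}\,d\langle X,p^g\rangle^\bF_s$ and expanding $\widehat{X}^{(g)}p^g=Xp^g-A^gp^g$, the contributions $\int\widehat{X}^{(g)}_{s-}\,dp^g_s$ and $\int p^g_{s-}\,dX_s$ are $\bF$-local martingales; the finite-variation term $\int p^g_{s-}\,dA^g_s=\langle X,p^g\rangle^\bF$ is exactly cancelled by the compensator inside $[X,p^g]=\langle X,p^g\rangle^\bF+(\bF\text{-local martingale})$; and, since $A^g$ has finite variation, the covariation term $[A^g,p^g]_t=\sum_{s\le t}\Delta A^g_s\,\Delta p^g_s$ vanishes because $\bF$, being generated by $W$ and $N$, is quasi-left-continuous, so the predictable process $\langle X,p^g\rangle^\bF$ — hence $A^g$ — is continuous. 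A further localization turns $\widehat{X}^{(g)}p^g$ into a genuine martingale on the interval of interest.

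Assembling the pieces, it suffices (after a localization making everything integrable) to verify $\EE[(\widehat{X}_t-\widehat{X}_s)F_s\,h(G)]=0$ for $s<t$, bounded $\cF_s$-measurable $F_s$, and bounded Borel $h$, since such variables generate $\cF_s\vee\sigma(G)$. Conditioning on $\cF_t$ and using the disintegration (ii),
\begin{equation*}
\EE\big[(\widehat{X}_t-\widehat{X}_s)\,F_s\,h(G)\big]=\int h(g)\,\EE\big[(\widehat{X}^{(g)}_t-\widehat{X}^{(g)}_s)\,F_s\,p^g_t\big]\,\PP^G(dg)\ ,
\end{equation*}
where on the right one works with the jointly measurable version of $\widehat{X}^{(g)}$. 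For each $g$ the inner expectation equals $\EE[\widehat{X}^{(g)}_t p^g_t F_s]-\EE[\widehat{X}^{(g)}_s p^g_s F_s]$ — having replaced $p^g_t$ by $\EE[p^g_t\mid\cF_s]=p^g_s$ in the second term via the $\bF$-martingale property of $p^g$ — and this is $0$ because $\widehat{X}^{(g)}p^g$ is an $\bF$-martingale and $F_s\in\cF_s$. A monotone-class argument then upgrades this to the $\bF^{\sigma(G)}$-local martingale property of $\widehat{X}$.

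The step I expect to be the real obstacle is the measure-theoretic bookkeeping rather than the algebra: choosing versions of $p^g$ and of $\langle X,p^g\rangle^\bF$ jointly measurable in $g$ so that the manipulation ``fix $g$, take the $\bF$-expectation, integrate $\PP^G(dg)$'' is legitimate and commutes with $\EE[\,\cdot\,]$ once $g=G$ is reinstated, together with arranging the localizing sequences of $\bF$-stopping times that let one pass between local and genuine martingales near the terminal time $T$ (where $p^g$ need not be uniformly integrable). A cleaner alternative that sidesteps the integration-by-parts step is the decoupling-measure argument: under the Jacod hypothesis there is $\QQ\sim\PP$ on $\cF^{\sigma(G)}_t$ for $t<T$ under which $G$ is independent of $\cF_t$ and $d\PP/d\QQ|_{\cF^{\sigma(G)}_t}=p^G_t$; under $\QQ$ every $\bF$-local martingale is automatically an $\bF^{\sigma(G)}$-local martingale, and Girsanov's theorem transfers this back to $\PP$, producing exactly the drift in \eqref{Jacod.semimartingale}.
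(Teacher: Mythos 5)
The paper itself gives no proof of this proposition: it is Jacod's classical theorem on initial enlargements, quoted from the literature (\cite{Jacod1985}; see also \cite{AksamitChoulliJeanblanc2015}), so there is no in-paper argument to compare against. Your reconstruction is, in substance, the standard proof, and its outline is correct: (a) the reduction of the $\bF^{\sigma(G)}$-local martingale property of $\widehat X=\widehat X^{(G)}$ to the $\bF$-local martingale property of $\widehat X^{(g)}p^g$ for $\PP^G$-a.e.\ $g$, carried out through the disintegration $\EE[\Phi(G,\cdot)\mid\cF_t]=\int \Phi(g,\cdot)\,p^g_t\,\PP^G(dg)$ and test variables $F_s\,h(G)$, with $p^g_t$ replaced by $\EE[p^g_t\mid\cF_s]=p^g_s$ in the $\cF_s$-measurable term; and (b) the integration-by-parts computation showing $\widehat X^{(g)}p^g$ is an $\bF$-local martingale, the compensator inside $[X,p^g]$ cancelling $\int p^g_{s-}\,dA^g_s=\langle X,p^g\rangle^{\bF}$.

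Two caveats. First, your cancellation of $[A^g,p^g]$ leans on quasi-left-continuity of $\bF$: that is legitimate in this paper, where $\bF$ is generated by $W$ and a counting process whose compensator $\int_0^\cdot\lambda_s\,ds$ is continuous, so $\bF$-local martingales jump only at totally inaccessible times and $\langle X,p^g\rangle^{\bF}$ is continuous; but the proposition is stated for a generic filtration, and the general proof dispenses with this by working with dual predictable projections directly. Also, you test only against $\cF_s\vee\sigma(G)$, whereas $\cF^{\sigma(G)}_s=\bigcap_{u>s}\left(\cF_u\vee\sigma(G)\right)$; the passage uses right-continuity of the paths and of the enlarged filtration and should be said. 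Second, the closing ``cleaner alternative'' is overstated: the equivalent decoupling measure with density $1/p^G_t$ requires the \emph{equivalence} form of Jacod's hypothesis ($\PP^G(\cdot\mid\cF_t)\sim\PP^G$, i.e.\ $p^g_t>0$ a.s.); under the mere absolute continuity assumed here that construction yields only a subprobability, so the Girsanov shortcut needs extra care. Neither caveat affects your main line of argument, which is sound.
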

When the compensator of the process $X$ appearing in \eqref{Jacod.semimartingale} is absolutely continuous with respect to the Lebesgue measure, 
its density is usually called the \emph{information drift} and it plays a crucial role in our computations.
\begin{definition}
The logarithmic price of the information of a filtration $\bH \supset \bF$ is given by
\begin{equation*}
 \Delta \bV_T^\bH = \bV_T^{\bH} - \bV_T^{\bF} \ ,
\end{equation*}
where the quantities on the right-hand side are defined in~\eqref{optimization.problem}.
\end{definition}
To conclude this section, we state the Clark-Ocone formula. 
The operators~$D_t$ and~$D_{t,1}$ refer to the Malliavin derivative in the Brownian and Poisson cases respectively and we consider their generalizations to $L^2(\PP)$.
We refer to \cite{DiNunnoOksendalProske2009} for the details and a general background.
\begin{proposition}
Let $G\in L^2(\PP)$ be an $\cF_T$-measurable random variable, then the following representation holds,
\begin{equation}\label{eq.clark-ocone}
G = \EE[G] + \int_0^T \EE[D_t G\vert \cF_t] dW_t + \int_0^T \EE[D_{t,1} G\vert \cF_t] d\tilde N_t\ .
\end{equation}
\end{proposition}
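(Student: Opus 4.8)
The plan is to deduce \eqref{eq.clark-ocone} from the predictable representation property of the model together with a direct verification on a dense class of square-integrable functionals. The filtration $\bF$ is generated by the Brownian motion $W$ and by the point process $N$ whose compensator $\int_0^\cdot\lambda_s\,ds$ is almost surely continuous, so every square-integrable $\bF$-martingale equals a constant plus a stochastic integral against $W$ plus one against $\tilde N$; applied to the martingale $t\mapsto\EE[G\mid\cF_t]$ this produces predictable processes $\phi\in L^2(dt\times\PP)$ and $\psi$ (in the analogous $L^2$ space for the jump part) with
\begin{equation*}
G=\EE[G]+\int_0^T\phi_s\,dW_s+\int_0^T\psi_s\,d\tilde N_s\ ,
\end{equation*}
and, by the \Ito isometries for these orthogonal martingales, the pair $(\phi,\psi)$ is unique. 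It then remains to identify $\phi_s$ with $\EE[D_sG\mid\cF_s]$ and $\psi_s$ with the predictable version of $\EE[D_{s,1}G\mid\cF_s]$.

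First I would establish this identification on a dense class $\cS$ --- for instance the linear span of the Dol\'eans--Dade exponentials
\begin{equation*}
\cE_T(h,k)=\exp\!\left(\int_0^T h_s\,dW_s-\tfrac12\int_0^T h_s^2\,ds+\int_0^T\ln(1+k_s)\,dN_s-\int_0^T k_s\lambda_s\,ds\right)
\end{equation*}
with $h,k$ bounded and deterministic and $k>-1$, whose span is dense in $L^2(\Omega,\cF_T,\PP)$. On such functionals the two Malliavin derivatives are explicit, $D_s\cE_T(h,k)=h_s\,\cE_T(h,k)$ and $D_{s,1}\cE_T(h,k)=k_s\,\cE_T(h,k)$, so the martingale property of $\cE(h,k)$ and the tower rule give $\EE[D_s\cE_T\mid\cF_s]=h_s\,\cE_s$ and $\EE[D_{s,1}\cE_T\mid\cF_{s-}]=k_s\,\cE_{s-}$. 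Since $\cE(h,k)$ solves the structure equation $d\cE_s=\cE_{s-}\bigl(h_s\,dW_s+k_s\,d\tilde N_s\bigr)$ with $\cE_0=1$, its representation integrands are precisely $\phi_s=\cE_{s-}h_s$ and $\psi_s=\cE_{s-}k_s$, and integrating over $[0,T]$ proves \eqref{eq.clark-ocone} for every $G\in\cS$.

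Finally I would extend to arbitrary $G\in L^2(\PP)$ by approximation. Taking $G_n\in\cS$ with $G_n\to G$ in $L^2(\PP)$, the \Ito isometries show that the two stochastic integrals in \eqref{eq.clark-ocone} evaluated at $G_n$ are Cauchy in $L^2(\PP)$, with error controlled by $\EE[(G_n-G)^2]$; hence $G\mapsto\bigl(\EE[D_\cdot G\mid\cF_\cdot],\,\EE[D_{\cdot,1}G\mid\cF_{\cdot-}]\bigr)$ extends continuously from $\cS$ to all of $L^2(\PP)$ --- this is exactly the generalization of $D$ and $D_{\cdot,1}$ to $L^2(\PP)$ mentioned just before the statement, and the same extension is visible from the chaos decomposition $G=\sum_n I_n(f_n)$, for which these conditional derivatives are given by square-summable series even when $G\notin\bD^{1,2}$. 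Passing to the limit on both sides of the identity proved for $G_n$ yields \eqref{eq.clark-ocone} in full generality. I expect the main effort to lie in the dense-class step --- handling the two Malliavin derivatives and the mixed stochastic exponential simultaneously, and checking that it is the predictable versions of the integrands that enter the integral against $\tilde N$ --- while the structural input one must not overlook is the predictable representation property of $(W,\tilde N)$, which rests on the almost sure continuity of the compensator of $N$, on the independence of the Brownian and Poisson components of the market, and, when $\lambda$ is not deterministic, on the appropriate form of the Poisson Malliavin calculus and chaos decomposition.
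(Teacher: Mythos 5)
Your argument is correct in outline, but note that the paper itself offers no proof of this proposition: it is invoked as the (generalized) Clark--Ocone formula and delegated to \cite{DiNunnoOksendalProske2009}, where it is obtained from the Wiener--Poisson chaos decomposition and the white-noise (Hida--Malliavin) extension of the derivatives, which is what gives meaning to $\EE[D_tG|\cF_t]$ and $\EE[D_{t,1}G|\cF_t]$ for every $G\in L^2(\PP)$. Your route --- predictable representation for the pair $(W,\tilde N)$, identification of the integrands on the dense span of Dol\'eans--Dade exponentials via $D_s\cE_T(h,k)=h_s\,\cE_T(h,k)$, $D_{s,1}\cE_T(h,k)=k_s\,\cE_T(h,k)$ and $d\cE_s=\cE_{s-}\left(h_s\,dW_s+k_s\,d\tilde N_s\right)$, followed by an isometry-based passage to the limit --- is the other standard proof and is essentially equivalent; it is self-contained and makes explicit where predictability of the jump integrand and the orthogonality of $W$ and $\tilde N$ enter, whereas the paper's citation buys the $L^2$-generalized operators ready-made. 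The one place your sketch needs firming up to cover the paper's stated setting is the intensity: with $\lambda$ a stochastic $\bF^N$-adapted process satisfying only $\int_0^T\lambda_s\,ds<\infty$ a.s., the exponentials $\cE_T(h,k)$ need not be square-integrable or true martingales, totality of their span in $L^2(\PP)$ requires an extra integrability argument, and the difference-operator calculus you use for $D_{t,1}$ is established for deterministic (or $\cF_0$-measurable) intensity; you flag this dependence at the end but do not resolve it, and the paper is equally silent on it, restricting to $\cF_0$-measurable $\lambda$ only in its examples, so this is a shared looseness rather than an error specific to your proof. Finally, your extension step defines $\EE[D_\cdot G|\cF_\cdot]$ and $\EE[D_{\cdot,1}G|\cF_\cdot]$ for general $G$ only as $L^2(dt\times\PP)$-limits of the corresponding integrands for $G_n$, so what you actually prove is the representation with these extended operators --- which is exactly the reading intended in the statement, but it is worth saying so explicitly.
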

\section{Initial Enlargements}\label{sec:initial-enlarg}
Using the predictable representation property (PRP) enjoyed by the compensated Poisson process, 
we know that for every $\cF^N_T$-measurable real valued random variable~\hbox{$G\in L^2(\PP)$}, 
there exists an $\bF^N$-adapted process 
$\varphi\in L^2(dt\times\PP)$ such that 
\begin{equation}\label{PRP.G.pois}
    G = \EE[G] + \int_0^T \varphi_s d\tilde N_s\ ,
\end{equation}
usually called the non-anticipative derivative of $G$.
Let $B\in\cB(\bR)$ be a subset and consider the following PRP,
\begin{equation}\label{PRP.B}
    \bOne_{\{G\in B\}} = \PP^G(B) + \int_0^T \varphi_s(B)d\tilde N_s\ ,
\end{equation}
where by the predictable process $\varphi(B)=(\varphi_t(B):0\leq t\leq T)$ we denote the unique one within the Hilbert space $L^2(dt\times\PP)$ that satisfies~\eqref{PRP.B} for a fixed $B$.
We will make the following assumption in order to apply a dominate convergence theorem on $\varphi(\cdot)$.
It can be verify that this assumption holds in all presented examples.
\begin{assumption}
  The process $\varphi:[0,T]\times\cB(\bR) \longrightarrow L^2(dt\times\PP,\bF)$ is bounded \hbox{$\PP$-almost} surely.
\end{assumption}
In the next lemma we prove that $\varphi(\cdot)$ is a vector measure,
we refer to \cite{VectorMeasures77} for the details and a general background on the vector measure theory.
\begin{lemma}\label{lemma.measure.1}
  The set function $B\longrightarrow \varphi(B)$, 
  with $B\in \cB(\bR)$,
  is a countably additive $L^2(dt\times\PP)$-valued vector measure.
\end{lemma}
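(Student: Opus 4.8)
The plan is to verify the two defining properties of a countably additive vector measure: that $\varphi(\emptyset) = 0$ in $L^2(dt\times\PP)$, and that for any sequence $(B_n)_{n\geq 1}$ of pairwise disjoint Borel sets with union $B$, the partial sums $\sum_{n=1}^{N}\varphi(B_n)$ converge to $\varphi(B)$ in the norm of $L^2(dt\times\PP)$. The value $\varphi(\emptyset)=0$ is immediate, since $\bOne_{\{G\in\emptyset\}}=0$ and the predictable representation in \eqref{PRP.B} is unique, so the representing process must be the zero element of the Hilbert space. Finite additivity is equally direct: for disjoint $B_1,\dots,B_k$ one has $\bOne_{\{G\in \cup_i B_i\}}=\sum_i \bOne_{\{G\in B_i\}}$ pointwise, hence $\PP^G(\cup_i B_i)+\int_0^T\varphi_s(\cup_i B_i)d\tilde N_s = \sum_i \PP^G(B_i) + \int_0^T \big(\sum_i\varphi_s(B_i)\big)d\tilde N_s$, and uniqueness of the PRP forces $\varphi(\cup_i B_i)=\sum_i\varphi(B_i)$ in $L^2(dt\times\PP)$.

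The substantive point is $\sigma$-additivity, i.e. upgrading finite additivity to a limit statement. First I would reduce to showing that $\varphi\big(\bigcup_{n>N}B_n\big)\to 0$ in $L^2(dt\times\PP)$ as $N\to\infty$; by the finite additivity just established this is equivalent to the convergence of the partial sums. Writing $C_N := \bigcup_{n>N}B_n$, note $\bOne_{\{G\in C_N\}}\to 0$ pointwise and is dominated by $1$, so by dominated convergence $\bOne_{\{G\in C_N\}}\to 0$ in $L^2(\PP)$. Now I would invoke the Itô isometry for the compensated Poisson integral: from \eqref{PRP.B},
$$
\EE\big[(\bOne_{\{G\in C_N\}}-\PP^G(C_N))^2\big] = \EE\Big[\int_0^T \varphi_s(C_N)^2\,\lambda_s\,ds\Big].
$$
Since $\PP^G(C_N)\to 0$ as well, the left-hand side tends to $0$, so $\EE\big[\int_0^T \varphi_s(C_N)^2\lambda_s\,ds\big]\to 0$. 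The remaining gap is that the norm on $L^2(dt\times\PP)$ is $\EE[\int_0^T\varphi_s(C_N)^2\,ds]$, with $ds$ rather than $\lambda_s\,ds$; here is exactly where Assumption~9 enters. Using that $\varphi(\cdot)$ is $\PP$-almost surely bounded, say $|\varphi_s(B)|\le K$ uniformly, together with $\int_0^T\lambda_s\,ds<\infty$ a.s., I would split the time integral over the set $\{\lambda_s \geq \delta\}$ and its complement: on the former $\varphi_s(C_N)^2 \le \delta^{-1}\varphi_s(C_N)^2\lambda_s$, which is controlled by the isometry bound, and on the latter $\int \varphi_s(C_N)^2\,ds \le K^2 \cdot |\{s: \lambda_s<\delta\}|$, which is made small by choosing $\delta$ small (alternatively, dominated convergence applied directly to $\varphi_s(C_N)^2$, which is dominated by $K^2$ and tends to $0$ in $dt\times\PP$-measure along a subsequence). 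Either route yields $\EE[\int_0^T\varphi_s(C_N)^2\,ds]\to 0$, which is the desired convergence.

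The main obstacle is the mismatch between the natural $L^2(\lambda_s\,ds\times\PP)$ norm coming from the Itô isometry and the $L^2(dt\times\PP)$ norm in which countable additivity is asserted; reconciling the two is the only place the boundedness assumption (Assumption~9) and the integrability hypothesis $\int_0^T\lambda_s\,ds<\infty$ are genuinely used, and care is needed because $\lambda$ is only assumed positive and integrable, not bounded below. Everything else — finite additivity, the null-set value, and the dominated-convergence argument for $\bOne_{\{G\in C_N\}}$ — is routine.
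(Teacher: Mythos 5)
Your proof is correct, and it handles the key limit step more explicitly than the paper does. The paper's argument stays at the level of random variables: it expands $\bOne_{\{G\in B\}}=\sum_i\bOne_{\{G\in B_i\}}$ in $L^2(\PP)$, applies the PRP term by term, interchanges the infinite sum with the stochastic integral by a dominated convergence argument resting on the boundedness assumption, and concludes $\varphi(B)=\sum_i\varphi(B_i)$ from uniqueness of the representation; in which norm the series of integrands converges, and why this is the $L^2(dt\times\PP)$ norm rather than the $\lambda_s\,ds$-weighted norm intrinsic to the stochastic integral, is left implicit. You instead work at the level of integrands: finite additivity from uniqueness of the PRP, then the tail estimate $\varphi\bigl(\cup_{n>N}B_n\bigr)\to 0$, obtained by transferring the $L^2(\PP)$-convergence of the indicators through the \Ito isometry and then reconciling $\EE\bigl[\int_0^T\varphi_s^2\lambda_s\,ds\bigr]$ with $\EE\bigl[\int_0^T\varphi_s^2\,ds\bigr]$ by splitting over $\{\lambda_s\ge\delta\}$ and its complement, choosing $\delta$ before $N$ and using the boundedness assumption together with the strict positivity of the intensity. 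What your route buys is a precise identification of where the boundedness assumption and the positivity of $\lambda$ are actually needed, and convergence exactly in the norm in which the lemma asserts countable additivity; the paper's route is shorter but compresses these points into the phrase ``dominated convergence.'' The only loose end in your write-up is the parenthetical alternative (convergence in $dt\times\PP$-measure ``along a subsequence'' plus dominated convergence), which requires the standard subsequence-of-subsequences step to conclude for the full sequence; your main $\delta$-splitting argument does not have this issue.
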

\begin{proof}
    Let $\{B_i\}_{i=1}^\infty\subset \cB(\bR)$ be a disjoint sequence of subsets satisfying $B = \cup_{i=1}^\infty B_i.$
    Then \hbox{$\bOne_{\{G\in B\}} =  \sum_{i=1}^\infty \bOne_{\{G\in B_i\}}$} $\PP$-almost surely in $L^2(dt\times\PP,\bF)$ (see Example 3 in~\cite{VectorMeasures77}).
    Then, using the PRP we get
    \begin{align*}
        \bOne_{\{G\in B\}}  =  \sum_{i=1}^\infty \bOne_{\{G\in B_i\}}  = \PP(G\in B) + \sum_{i=1}^\infty \int_0^T \varphi_t(B_i)d\tilde N_t =  \int_0^T \sum_{i=1}^\infty\varphi_t(B_i)d\tilde N_t \ ,
    \end{align*}
    where we have used the dominate convergence theorem.
    Finally, by the uniqueness of the PRP we deduce 
    \hbox{$\varphi(B) = \sum_{i=1}^\infty\varphi(B_i)$} 
    and the result follows.
\end{proof}
In the following lemma we state the Radon-Nikodym derivative for the Hilbert valued random measure $\varphi$.
We shall assume that the vector measure $\varphi$ is of bounded variation, i.e., \hbox{$\abs{\varphi(\bR)}<+\infty$} according to Definition~4 of~\cite{VectorMeasures77}.
\begin{lemma}\label{lemma.measure.2}
  With the previous set-up, there exists a set of processes 
  \hbox{$\psi^g = \prT{\psi^g}$} with \hbox{$g\in Supp(G)$} and within $L^1(dt\times\PP^G)$ such that
  \begin{equation}
      \varphi_t(B) = \int_B \psi_t^g\PP^G(dg)\ ,\quad B\subset \bR\ .
  \end{equation}
\end{lemma}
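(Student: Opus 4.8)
The plan is to deduce the existence of the Radon–Nikodym density $\psi^g$ from Lemma~\ref{lemma.measure.1} by invoking a vector-measure version of the Radon–Nikodym theorem, together with the absolute continuity that Assumption~3 (on the boundedness of $\varphi$) guarantees with respect to $\PP^G$. First I would observe that the $L^2(dt\times\PP)$-valued vector measure $B\mapsto\varphi(B)$ is absolutely continuous with respect to the scalar measure $\PP^G$: indeed if $\PP^G(B)=0$ then $\bOne_{\{G\in B\}}=0$ $\PP$-a.s., so by uniqueness of the PRP in~\eqref{PRP.B} we get $\varphi(B)=0$ in $L^2(dt\times\PP)$; more quantitatively, the boundedness assumption lets one control $\norm{\varphi(B)}_{L^2(dt\times\PP)}$ by a constant times $\PP^G(B)$, which yields the variation of $\varphi$ being finite and $\PP^G$-continuous. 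This places us exactly in the hypotheses of the Radon–Nikodym theorem for vector measures valued in a Banach space having the Radon–Nikodym property.

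Next I would recall that $L^1(dt\times\PP)$ — which is the natural target once we pass from sets $B$ to their densities — need not have the Radon–Nikodym property in general, so the argument cannot simply cite the abstract theorem for $L^1$-valued measures. Instead, the cleanest route is to work fibrewise. For each fixed $g$ (ranging over a dense countable subset of $\mathrm{Supp}(G)$, then extending by continuity/monotone-class arguments) one disintegrates: the scalar measures $B\mapsto \varphi_t(B)(\omega)$, for $(t,\omega)$ fixed, are, after the PRP, the ``Malliavin-type'' conditional densities, and one sets $\psi^g_t := d\varphi_t(\cdot)/d\PP^G$ evaluated at $g$. Concretely I would define $\psi^g$ as the $L^2(dt\times\PP)$-limit (or the pointwise Radon–Nikodym derivative) of $\varphi([g-h,g+h])/\PP^G([g-h,g+h])$ as $h\downarrow 0$ along a subsequence, using the boundedness of $\varphi$ from Assumption~3 to pass to the limit by dominated convergence, exactly as flagged in the text. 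Countable additivity from Lemma~\ref{lemma.measure.1} ensures this differentiation is consistent across a generating algebra of Borel sets, and monotone-class then gives $\varphi_t(B)=\int_B\psi^g_t\,\PP^G(dg)$ for all $B\in\cB(\bR)$. Finally, Fubini together with $\int_\bR \psi^g_t\,\PP^G(dg)=\varphi_t(\bR)<\infty$ and the integrability of $\varphi$ in $L^2(dt\times\PP)$ shows $\psi^g\in L^1(dt\times\PP^G)$, and measurability in $g$ follows because the approximating quotients are measurable.

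The main obstacle I expect is the measurability and joint-integrability bookkeeping: one must produce a single process $(t,\omega,g)\mapsto\psi^g_t(\omega)$ that is jointly measurable and simultaneously serves as the density for \emph{all} $B$, rather than a family of densities defined only $\PP^G$-a.e.\ $g$ with a $B$-dependent null set. Handling this requires fixing a countable generating field of Borel sets, performing the Radon–Nikodym differentiation there, checking consistency on overlaps, and then invoking the monotone-class theorem to extend to $\cB(\bR)$ — all while the boundedness Assumption~3 does the analytic heavy lifting to justify the limits. The remaining steps (absolute continuity, dominated convergence, Fubini for the $L^1(dt\times\PP^G)$ conclusion) are routine once this measurable selection is in place.
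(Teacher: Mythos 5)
Your first step coincides exactly with the paper's proof: if $\PP^G(B)=0$ then $\bOne_{\{G\in B\}}=0$ $\PP$-almost surely, so uniqueness of the PRP forces $\varphi(B)=0$, i.e. $\varphi\ll\PP^G$. The divergence, and the gap, come right after. By Lemma~\ref{lemma.measure.1} the vector measure $B\mapsto\varphi(B)$ takes values in $L^2(dt\times\PP)$, which is a Hilbert space and therefore has the Radon--Nikodym property; the density being sought is a Bochner-integrable map $g\mapsto\psi^g$ with values in $L^2(dt\times\PP)$, so the relevant RNP is that of $L^2(dt\times\PP)$, not of $L^1(dt\times\PP)$. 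Your reason for refusing the abstract vector-measure Radon--Nikodym theorem is therefore based on a confusion about the target space, and that abstract theorem (Proposition 2.1 of \cite{Kakihara11}, for Hilbert-space-valued measures) is precisely what the paper invokes to conclude in one line after the absolute-continuity observation.

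The hand-made substitute you propose instead contains the genuine gaps. First, the claimed control $\norm{\varphi(B)}_{L^2(dt\times\PP)}\le C\,\PP^G(B)$ does not follow from the boundedness assumption, which only yields a bound uniform in $B$, so the finiteness and $\PP^G$-continuity of the variation are asserted rather than proved. Second, dominated convergence cannot produce the existence of the limit of $\varphi([g-h,g+h])/\PP^G([g-h,g+h])$ as $h\downarrow 0$: domination controls a limit already known to exist, it does not create one; and extracting subsequences (which a priori depend on $g$, and in $L^2$ bounded sequences only give weakly convergent subsequences) destroys exactly the joint measurability in $(t,\omega,g)$ that you yourself single out as the crux. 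Carrying out the differentiation on a countable generating field, checking consistency and gluing by monotone class would amount to reproving the Radon--Nikodym property of $L^2(dt\times\PP)$ in this special case --- that is, reproving the theorem you declined to cite. The efficient argument is the paper's: your absolute-continuity step plus the Radon--Nikodym theorem for Hilbert-space-valued vector measures.
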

\begin{proof}
    If $ B\subset \bR$ satisfies $\PP^G(B)= 0$,
    then the random variable $\bOne_{\{G\in B\}}$ is $\PP$-almost surely equal to zero 
    and by the uniqueness of the PRP we know that $\varphi(B) = 0$ and we conclude $\varphi \ll \PP^G$ on $\sigma(G)$.
    We get the result by applying Proposition 2.1 of~\cite{Kakihara11}.
\end{proof}
We fix $g$ in the support of $G$, not yet necessary binary. By Lemmas~\ref{lemma.measure.1} and~\ref{lemma.measure.2} we know that 
there exists a set of processes 
\hbox{$\psi^g = \prT{\psi^g}$} with \hbox{$g\in Supp(G)$} such that,
\begin{equation}\label{repr.binary}
    \bOne_{\{G\in dg\}} =
    \PP^G(dg) + \int_0^T \psi_s^g\PP^G(dg) d\tilde N_s\ . 
\end{equation}
When $G$ is purely atomic, the PRP \eqref{repr.binary} reduces to
\begin{equation}\label{repr.binary.atomic}
    \bOne_{\{G = g\}} =
    \PP(G = g) + \int_0^T \psi_s^g\PP(G = g) d\tilde N_s\ .
\end{equation}
\begin{lemma}\label{pois.drift}
  Let $G\in L^2(\PP)$ be an $\cF_T^N$-measurable random variable satisfying the {Jacod hypothesis}, 
  then the process $\gamma^G=\prTmenos{\gamma^G}$ defined as
    \begin{equation}
        \gamma_t^G :=  \frac{\psi^G_t}{p_t^G}\ ,
        \end{equation}
        satisfies that $ N_\cdot-\int_0^\cdot\lambda_s\left(1+\gamma_s^G\right) ds$
        is a $\bG$-local martingale.
    \end{lemma}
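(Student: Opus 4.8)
The plan is to deduce the statement from Proposition \ref{prop.jacod.semimartingale} applied to the compensated Poisson process, after identifying the information drift explicitly. Since $\tilde N_t = N_t - \int_0^t\lambda_s\,ds$ is an $\bF$-local martingale, Proposition \ref{prop.jacod.semimartingale} with $X=\tilde N$ tells us that, for $g\in\text{Supp}(G)$,
\begin{equation*}
\widehat{\tilde N}_t = \tilde N_t - \int_0^t\frac{d\langle \tilde N, p^g\rangle_s^\bF}{p^g_{s-}}
\end{equation*}
is an $\bF^{\sigma(G)}$-local martingale, so the whole proof reduces to showing that this last integral equals $\int_0^t\lambda_s\gamma_s^g\,ds$.

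First I would represent the conditional density $p^g$ through $\psi^g$. When $G$ is atomic (the case relevant for the subsequent results) this is immediate: dividing \eqref{repr.binary.atomic} by $\PP(G=g)>0$ and taking $\EE[\,\cdot\,|\cF_t]$, and using that the $d\tilde N$-integral is an $\bF$-martingale, gives $p_t^g = 1 + \int_0^t\psi_s^g\,d\tilde N_s$ for $0\leq t<T$. For a general $\cF_T^N$-measurable $G$ I would instead note that for fixed $g$ the process $p^g$ is a nonnegative $\bF^N$-martingale with $p_0^g=1$, hence — by the predictable representation property of $\tilde N$ — of the form $p_t^g = 1 + \int_0^t\eta_s^g\,d\tilde N_s$ for a unique predictable $\eta^g$; integrating this against $\PP^G(dg)$ over a Borel set $B$, applying a stochastic Fubini theorem (justified by the boundedness assumption on $\varphi$, equivalently the $L^1(dt\times\PP^G)$-integrability in Lemma \ref{lemma.measure.2}), comparing with \eqref{PRP.B}, and using uniqueness of the PRP together with uniqueness of the Radon--Nikodym derivative, I would conclude $\eta^g=\psi^g$. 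In both cases one obtains $dp_s^g = \psi_s^g\,d\tilde N_s$, so $p^g$ is a purely discontinuous $\bF$-martingale with $\Delta p_s^g = \psi_s^g\,\Delta N_s$.

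The bracket is then routine: $[\tilde N, p^g]_t = \sum_{s\leq t}(\Delta N_s)^2\psi_s^g = \int_0^t\psi_s^g\,dN_s$, whose $\bF$-predictable compensator is $\int_0^t\psi_s^g\lambda_s\,ds$, so $d\langle \tilde N, p^g\rangle_s^\bF = \psi_s^g\lambda_s\,ds$. Because this bracket is absolutely continuous in $s$, $p^g_{s-}$ may be replaced by $p^g_s$ inside the integral (they differ only on the countable set of jump times), so
\begin{equation*}
\int_0^t\frac{d\langle \tilde N, p^g\rangle_s^\bF}{p^g_{s-}} = \int_0^t\frac{\psi_s^g}{p_s^g}\lambda_s\,ds = \int_0^t\gamma_s^g\lambda_s\,ds\ ,
\end{equation*}
and plugging this into the displayed expression for $\widehat{\tilde N}$ yields that $N_\cdot - \int_0^\cdot\lambda_s(1+\gamma_s^G)\,ds$ is an $\bF^{\sigma(G)}$-local martingale, as claimed.

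I expect the main obstacle to be the identification of $p^g$ with the stochastic integral of $\psi^g$ in the non-atomic case: the stochastic Fubini interchange between the $d\tilde N$-integration and the $\PP^G(dg)$-integration, and the bookkeeping of the $\PP^G$-null sets of $g$ on which the pointwise-in-$g$ identities could fail. One also needs $p_s^g>0$ for $s<T$ — which holds under the Jacod hypothesis — for $\gamma^g$ to be well-defined. The remaining ingredients, namely the quadratic-covariation computation and the passage from $\tilde N$ back to $N$, are standard.
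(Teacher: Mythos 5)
Your argument is correct and follows essentially the same route as the paper: identify $dp_t^g=\psi_t^g\,d\tilde N_t$ from the representation of $\bOne_{\{G\in dg\}}$, compute $\langle \tilde N,p^g\rangle_t=\int_0^t\psi_s^g\lambda_s\,ds$, and plug into Proposition \ref{prop.jacod.semimartingale}. Your extra care about the non-atomic identification of $\psi^g$, the replacement of $p^g_{s-}$ by $p^g_s$ under the $ds$-absolutely continuous bracket, and the positivity of $p^g$ only tightens steps the paper treats informally.
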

\begin{proof}
    As before, we compute the process $p_t^g$,
\begin{align}
        p_t^g &= \frac{\PP^G(dg\vert \cF^N_t)}{\PP^G(dg)} = \frac{\EE[\bOne_{\{G\in dg\}}\vert \cF^N_t]}{\PP^G(dg)} \notag\\ 
        &= \frac{\PP^G(dg) +  \int_0^t \psi_s^g\PP^G(dg)d\tilde N_s}{\PP^G(dg)}
        = 1 + \frac{\int_0^t \psi_s^g\PP^G(dg) d\tilde N_s}{\PP^G(dg)}\ .
\end{align}
    Differentiating in a \Ito sense, we get 
    $dp_t^g =\psi^{g}_t d\tilde N_t$.
    According to Theorem 9.2.2.1 of~\cite{Jeanblanc2009mathematical}, we have
    $$ \langle p^g, \tilde N\rangle_t^\bF = \int_0^t \psi^{g}_s  d\langle \tilde N, \tilde N\rangle_s^\bF =  \int_0^t \psi^{g}_s  \lambda_s ds\ . $$
    Then, following the lines of Proposition \ref{prop.jacod.semimartingale}, we get
    $$ \int_0^t \frac{d\langle p^G, \tilde N\rangle_s^\bF}{p_{s-}^G} =   \int_0^t \frac{\psi^G_s}{p_{s-}^G} \lambda_s ds\ ,\quad 0\leq t < T\ ,$$
    and the result holds true.
\end{proof}
Note that the Lemma \ref{pois.drift} simplifies the computations leading to Theorem~2.4 of~\cite{ANKIRCHNER2008}.
From here until the end of the section, we will assume that the random variable $G$ is binary, i.e., $Supp(G)=\{0,1\}$.
\begin{theorem}\label{alpha.binary}
  If $G$ is a binary random variable, then
  \begin{equation}
      \gamma_t^G = \varphi_t\frac{G-\EE[G\vert \cF^N_t]}{\VV[G\vert \cF^N_t]}\ .
  \end{equation}
\end{theorem}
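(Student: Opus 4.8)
The plan is to express everything in terms of the single non-anticipative derivative of $G$ and then to match the two values $g\in\{0,1\}$. Since $\mathrm{Supp}(G)=\{0,1\}$ we have $G=\bOne_{\{G=1\}}$ and $\EE[G]=\PP(G=1)$, so the representation $G=\EE[G]+\int_0^T\varphi_s\,d\tilde N_s$ used above is precisely \eqref{PRP.B} with $B=\{1\}$; by uniqueness of the PRP, $\varphi(\{1\})=\varphi$. Applying the PRP to $\bOne_{\{G=0\}}=1-G$ and invoking linearity and uniqueness gives $\varphi(\{0\})=-\varphi$. Then Lemma~\ref{lemma.measure.2}, specialised to the purely atomic measure $\PP^G$ (so that \eqref{repr.binary.atomic} holds), yields $\varphi_t(\{g\})=\psi_t^g\,\PP(G=g)$ for $g\in\{0,1\}$, hence $\psi_t^1=\varphi_t/\PP(G=1)$ and $\psi_t^0=-\varphi_t/\PP(G=0)$.

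Next I would compute the conditional density $p^g$ for the two atoms. From the computation in the proof of Lemma~\ref{pois.drift}, for an atomic $G$ one has $p_t^g=\EE[\bOne_{\{G=g\}}\mid\cF_t^N]/\PP(G=g)=\PP(G=g\mid\cF_t^N)/\PP(G=g)$, so that $p_t^1=\EE[G\mid\cF_t^N]/\EE[G]$ and $p_t^0=(1-\EE[G\mid\cF_t^N])/(1-\EE[G])$. Forming the quotient $\gamma_t^g=\psi_t^g/p_t^g$, the factors $\PP(G=g)$ cancel and one obtains $\gamma_t^1=\varphi_t/\EE[G\mid\cF_t^N]$ and $\gamma_t^0=-\varphi_t/(1-\EE[G\mid\cF_t^N])$.

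It then remains to recognise these two expressions as the two values of the claimed closed form. Because $G^2=G$, the conditional variance is $\VV[G\mid\cF_t^N]=\EE[G\mid\cF_t^N]\bigl(1-\EE[G\mid\cF_t^N]\bigr)$; substituting $G=1$ into $\varphi_t\,(G-\EE[G\mid\cF_t^N])/\VV[G\mid\cF_t^N]$ returns $\varphi_t/\EE[G\mid\cF_t^N]=\gamma_t^1$, and substituting $G=0$ returns $-\varphi_t/(1-\EE[G\mid\cF_t^N])=\gamma_t^0$, which is the assertion for $\gamma^G_t=\gamma^g_t\big|_{g=G}$.

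I do not expect a serious obstacle here: the argument is bookkeeping built on Lemmas~\ref{lemma.measure.1}, \ref{lemma.measure.2} and \ref{pois.drift}. The two points needing a little care are the sign in $\varphi(\{0\})=-\varphi$, and the nondegeneracy of the denominators, namely $\EE[G\mid\cF_t^N]\in(0,1)$ for $t<T$ so that $p_t^1,p_t^0$ (hence the quotients) are well defined; this is exactly the positivity of the density process $p^g$ on $\mathrm{Supp}(G)$ that the Jacod hypothesis provides.
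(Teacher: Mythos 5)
Your argument is correct and follows essentially the same route as the paper's own (much terser) proof: uniqueness of the PRP gives $\varphi^1=\varphi=-\varphi^0$, the atomic representation gives $\psi^g_t=\varphi^g_t/\PP(G=g)$ and $p^g_t=\PP(G=g\mid\cF^N_t)/\PP(G=g)$, and the quotient $\gamma^g_t=\psi^g_t/p^g_t$ matches the claimed formula via $\VV[G\mid\cF^N_t]=\EE[G\mid\cF^N_t](1-\EE[G\mid\cF^N_t])$. You simply spell out the bookkeeping (including the sign for $g=0$ and the positivity of $p^G$) that the paper leaves implicit.
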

\begin{proof}
As $G = \bOne_{\{G=1\}}$, by the uniqueness of the representation we conclude 
\hbox{$\varphi = \varphi^1$} and \hbox{$\varphi^1 = -\varphi^0$}.
Using that 
$$\EE[G\vert \cF^N_t] = \PP(G=1\vert \cF^N_t)\ ,\quad \VV[G\vert \cF^N_t] = \PP(G=1\vert \cF^N_t)(1-\PP(G=1\vert \cF^N_t))\ ,$$ 
the result follows by applying \eqref{PRP.G.pois} and \eqref{repr.binary.atomic}.
\end{proof}
\begin{remark}
Note that we can express
$$   \gamma_t^g= \frac{\varphi_t}{\PP(G =  0\vert \cF^N_t)-g} \ ,\quad g\in\{0,1\}\ ,$$
where $\varphi$ is the non-anticipative derivative of the binary random variable $G$.
\end{remark}
By using the Clark-Ocone formula we can deduce that 
$\varphi_t = \EE[D_{t,1} G\vert \cF^N_t],$ \hbox{$\forall t \in [0,T]$} and $\PP$-almost surely,
which allows to compute some interesting examples.
Following~\cite{SoleUtzetVives2007}, we introduce the following operator 
\begin{equation}
    \Psi_{t,1} G := {G(\omega_{(t,1)})-G(\omega)}\ ,
\end{equation}
being $\omega_{(t,1)}$ the modification of the trajectory $\omega$ by adding a new jump of size~$1$ at time $t$. 
In \cite{AlosLeonPontierVives2008} it is proved that if $\abs{\Psi_{t,1} G}^2\in L^1(dt \times \PP)$, then this operator coincides with the usual Malliavin derivative, in the case of the Poisson process we have that \hbox{$\Psi_{t,1} G = D_{t,1} G$}.
\begin{example}\label{Example.1}
Let $G = \bOne_{\{N_T \leq b \}} $ with $b\in\bN$ and consider the initial enlargement \hbox{$\bG\supset\bF$}.
We compute the process $\varphi$ as follows,
\begin{equation*}
    \Psi_{t,1} \bOne_{\{N_T \leq b \}} = \bOne_{\{N_T + 1 \leq b \}} - \bOne_{\{N_T \leq b \}} = -\bOne_{\{N_T = b \}}
\end{equation*}
which obviously satisfies the integrability condition and therefore 
$$D_{t,1} \bOne_{\{N_T \leq b \}} = 
-\bOne_{\{N_T = b \}}\ . $$
In order to compute the Clark-Ocone formula we compute its conditional expectation as follows,
\begin{equation*}
    \EE\left[D_{t,1} \bOne_{\{N_T \leq b \}} \vert \cF_t^N\right] = -\PP\left(N_T = b \vert \cF_t^N\right)\ .
\end{equation*}
The PRP holds true,
\begin{equation}
    \bOne_{\{N_T \leq b \}} = \PP(N_T \leq b) - \int_0^T \PP\left(N_T = b \vert \cF_t^N\right) 
    d\tilde N_t\ . 
\end{equation}
Then, we compute the compensator as
\begin{equation}
    \gamma^g_t = \frac{\PP\left(N_T = b \vert \cF_t^N\right)}{\PP(N_T > b\vert \cF_t^N)-g} \ ,\quad g\in\{0,1\}\ .
\end{equation}
Note that $\gamma^G_t \geq -1$ because $\gamma^0_t \geq 0$ and
$$ \gamma^1_t = - \frac{\PP(N_T = b\vert \cF_t^N)}{\PP(N_T \leq b \vert \cF_t^N)}
\geq -1\ .$$ 
We will need this in order to compute $\ln(1 + \gamma_t^G)$.
In addition, we can achieve more explicit results if we assume that $\lambda_t$ is $\cF_0^N$-measurable $\forall t\in[0,T]$, 
where the \hbox{$\sigma$-algebra} $\cF_0^N$ can be non-constant but we need to assure that $N$ is still an~\hbox{$\bF^N$-adapted} counting process with compensator $\lambda$.
Then we can compute the probabilities as follows
$$ \PP(N_t-N_s = n \vert \cF^N_s) = e^{-\Lambda(s,t)}\frac{(\Lambda(s,t))^n}{n!} \ ,\quad \Lambda(s,t):= \int_s^t \lambda_u du\ .$$
In particular, we are interested in 
$$ \PP\left(N_T = b \vert \cF_t^N\right) = e^{-\Lambda(t,T)}\frac{(\Lambda(t,T))^{b-N_t}}{(b-N_t)!} \bOne_{\{N_t\leq b\}} \ .$$
The PRP is simplified as follows
\begin{equation}
    \bOne_{\{N_T \leq b \}} = \PP(N_T \leq b) - \int_0^T e^{-\Lambda(t,T)}\frac{(\Lambda(t,T))^{b-N_t}}{(b-N_t)!}  \bOne_{\{N_t\leq b\}} d\tilde N_t 
\end{equation}
and the compensator is
\begin{equation}
    \gamma^G_t = \frac{e^{-\Lambda(t,T)}}{(b-N_t)!}\frac{(\Lambda(t,T))^{b-N_t}\bOne_{\{N_t\leq b\}}}{\PP(N_T > b\vert \cF_t^N)-G} \ .
\end{equation}
Note that, in the simplest case of time-homogeneous Poisson process with constant intensity $\lambda > 0$, we have $\Lambda(t,T) = \lambda(T-t)$.
\end{example}
\begin{example}
Let $G = \bOne_{\{ N_T \in B \}} $ with $B=[b_1,b_2]$, and $b_1,b_2\in\bN$. 
We consider the initial enlargement $\bG\supset\bF$.
We compute the process $\varphi$ as before,
$$D_{t,1} \bOne_{\{ N_T \in B\}} = \bOne_{\{ N_T+1 \in B\}} - \bOne_{\{ N_T \in B\}}\ .$$
In order to compute the Clark-Ocone formula we compute its conditional expectation as follows,
\begin{align*}
    \EE\left[D_{t,1} \bOne_{\{ N_T \in B\}} \vert \cF_t^N\right] &= \PP\left(N_T = b_1-1\vert \cF^N_t \right) - \PP\left(N_T = b_2\vert \cF_t^N \right)
\end{align*}
and the PRP holds,
\begin{equation}
    \bOne_{\{N_T\in B \}} = \PP(N_T\in B) - \int_0^T  \left(\PP\left(N_T = b_2\vert \cF_t^N \right) - \PP\left(N_T = b_1-1 \vert  \cF^N_t \right) \right)  d\tilde N_t\ ,
\end{equation}
giving the following formula for compensator
\begin{equation}
    \gamma^G_t = \frac{ \PP\left(N_T = b_2\vert \cF_t^N \right) - \PP\left(N_T = b_1-1 \vert  \cF^N_t \right)}{\PP(N_T\in B^{c}\vert \cF_t^N)-G} \ .
\end{equation}
A direct computation shows that $\gamma^G_t\geq -1.$
If we assume that the process $\lambda$ is \hbox{$\cF_0^N$-measurable}, 
then the PRP simplifies as follows
\begin{align*}
    \bOne_{\{N_T\in B \}} =& \,\PP(N_T\in B)\\ 
    &- \int_0^T  e^{-\Lambda(t,T)}\left(\frac{(\Lambda(t,T))^{b_2-N_t}}{(b_2-N_t)!}\bOne_{\{N_t\leq b_2\}} - \frac{(\Lambda(t,T))^{b_1-N_t-1}}{(b_1-N_t-1)!}\bOne_{\{N_t< b_1\}} \right) d\tilde N_t
\end{align*}
and the compensator is
$$ 
 \gamma^G_t = \left(\frac{(\Lambda(t,T))^{b_2-N_t}}{(b_2-N_t)!}\bOne_{\{N_t\leq b_2\}} - \frac{(\Lambda(t,T))^{b_1-N_t-1}}{(b_1-N_t-1)!}\bOne_{\{N_t < b_1\}} \right)  \frac{ e^{-\Lambda(t,T)}}{\PP(N_T\in B^{c}\vert \cF_t^N)-G} \ .
$$
\end{example}
\subsection{Price of the information}\label{subsec:price}
Working in the filtration $\bF$, 
if we take expectation in \eqref{solution.X}, then
$$ \EE\left[\ln\frac{X^\pi_T}{X_0}\right] = \EE\left[\int_0^T \rho_s + \pi_s(\mu_s-\rho_s) + \lambda_s(\ln(1+\pi_s\theta_s)-\pi_s\theta_s)\, ds\right]\ , $$
with $\pi\in\cA(\bF)$. 
Using that the maximum is attained in the strategy given by~\eqref{merton.poisson}, the solution of the optimal control problem is
\begin{equation}
    \bV^{\bF}_T = \int_0^T \EE\left[\rho_s - \frac{\mu_s-\rho_s}{\theta_s} + \lambda_s\ln\left(\frac{\lambda_s}{\lambda_s - (\mu_s-\rho_s)/{\theta_s}}\right) \right]ds\ ,
\end{equation}
which is trivially positive as for the non-arbitrage condition \eqref{hyp4} all the terms are well-defined and positive.
If $\pi\in\cA(\bG)$, the \Ito integral with respect to $\tilde N$ is not necessary well defined, 
but using the \emph{Jacod hypothesis} and, therefore, 
the $\bG$-semimartingale decomposition we can still use it.
We define the process
$$ \widehat{N}_t := N_t - \int_0^t \lambda_s\left(1+\gamma_s^G\right) ds\ ,\quad 0\leq t\leq T\ , $$
which is a $\bG$-local martingale by Theorem \ref{alpha.binary}.
Then the dynamics of the wealth process satisfy the following SDE,
\begin{equation}\label{new.dynamics}
        \frac{dX_t^\pi}{X_{t-}^{\pi}} = \left((1-\pi_t)\rho_t+ \pi_t\mu_t+ \pi_t\theta_t\lambda_t\gamma_t^G \right)dt + \pi_t\theta_t d\widehat{N}_t \ ,\quad X_0 = x_0 \ ,
\end{equation}
and we have the following explicit solution
\begin{align*}
    \ln \frac{X_t^\pi}{x_0} =& \int_0^t\left( \rho_s + \pi_s(\mu_s-\rho_s) + \lambda_s(1+\gamma_s^G)\ln(1+\pi_s\theta_s)-\lambda_s\pi_s\theta_s\right)ds \\
    &+ \int_0^t \ln(1+\pi_s\theta_s) d\widehat{N}_s\ .
\end{align*}
As it is argued in \cite{AmendingerImkellerSchweizer1998}, by using the integrability condition of $\ln(1+\pi_t\theta_t)$, the stochastic integral satisfies
    $$ \EE\left[\int_0^T \ln(1+\pi_s\theta_s)d \widehat{N}_s\right] = 0 \ .$$
    Then,
    \begin{equation}\label{wealth.G}
        \EE\left[\ln \frac{X_T^\pi}{x_0}\right] = \int_0^T \EE\left[\rho_s + \pi_s(\mu_s-\rho_s) + \lambda_s(1+\gamma_s^G)\ln(1+\pi_s\theta_s)-\lambda_s\pi_s\theta_s\right]ds \ .
    \end{equation}
In the next proposition we compute the optimal strategy for a $\bG$-agent.
\begin{proposition}\label{pi.pois}
If G is binary and $\theta_t\neq 0$ $dt\times \PP$-almost surely, 
then the strategy solving the optimization problem \eqref{optimization.problem} with information flow $\bG$ is given by
\begin{equation}\label{pi.pois.eq}
     \pi_t^G = \frac{\mu_t-\rho_t}{\lambda_t\theta_t^2-\theta_t(\mu_t-\rho_t)} + \frac{\lambda_t\gamma^G_t}{\lambda_t\theta_t-(\mu_t-\rho_t)}\ .
\end{equation}
\end{proposition}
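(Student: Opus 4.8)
The plan is to start from the explicit expression \eqref{wealth.G} for the $\bG$-agent's expected logarithmic wealth and maximise its integrand pathwise. Writing
\begin{equation*}
  \EE\left[\ln\frac{X_T^\pi}{x_0}\right] = \int_0^T \EE\left[f_s(\pi_s)\right] ds, \qquad f_s(x) := \rho_s + x(\mu_s-\rho_s) + \lambda_s(1+\gamma_s^G)\ln(1+x\theta_s) - \lambda_s x\theta_s,
\end{equation*}
one observes that for each fixed $(s,\omega)$ the value $f_s(\pi_s)$ depends on the strategy only through its current value, and that the admissibility constraints \eqref{hyp1}--\eqref{hyp4} are themselves pointwise in $(s,\omega)$. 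Hence it suffices to maximise $x\mapsto f_s(x)$ over the admissible set $\{x : 1+x\theta_s>0\}$ and then verify that the resulting process is admissible, so that the interchange of supremum and integral is legitimate.

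Next I would carry out the one-variable optimisation. Differentiating, $f_s'(x) = (\mu_s-\rho_s) - \lambda_s\theta_s + \lambda_s(1+\gamma_s^G)\theta_s/(1+x\theta_s)$ and $f_s''(x) = -\lambda_s(1+\gamma_s^G)\theta_s^2/(1+x\theta_s)^2$. Since $\lambda_s>0$ and $1+\gamma_s^G>0$ (this positivity being exactly what is checked in the examples, e.g.\ Example~\ref{Example.1}), $f_s$ is strictly concave on $\{1+x\theta_s>0\}$, so the unique root of $f_s'(x)=0$ is the global maximiser. Solving gives $1+x\theta_s = \lambda_s(1+\gamma_s^G)\theta_s/(\lambda_s\theta_s-(\mu_s-\rho_s))$, and rearranging yields precisely \eqref{pi.pois.eq}; here $\theta_s\neq 0$ together with \eqref{hyp4} guarantee that neither $\lambda_s\theta_s^2-\theta_s(\mu_s-\rho_s)=\theta_s^2(\lambda_s-(\mu_s-\rho_s)/\theta_s)$ nor $\lambda_s\theta_s-(\mu_s-\rho_s)$ vanishes, and setting $\gamma^G\equiv 0$ recovers \eqref{merton.poisson}.

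The remaining step, which I expect to be the only real obstacle, is to check that $\pi^G=(\pi_t^G)_{0\le t\le T}$ given by \eqref{pi.pois.eq} actually belongs to $\cA(\bF^{\sigma(G)})$, so that it attains the supremum in \eqref{optimization.problem}. It is càglàd and $\bF^{\sigma(G)}$-adapted because $\lambda,\theta,\mu,\rho$ are and because $\gamma_t^G=\psi_t^G/p_t^G$ is, the fixed-$g$ processes of Theorem~\ref{alpha.binary} being evaluated at the $\sigma(G)$-measurable value $g=G$; condition \eqref{hyp1} concerns $\theta$ alone. For \eqref{hyp3}, the identity obtained above gives $1+\pi_t^G\theta_t = \lambda_t(1+\gamma_t^G)\theta_t/(\lambda_t\theta_t-(\mu_t-\rho_t))>0$ (numerator and denominator share the sign of $\theta_t$ by \eqref{hyp4}), and under the boundedness hypotheses on the coefficients and on the process $\varphi$, hence on $\gamma^G$, it is bounded away from $0$, yielding the required $\epsilon^{\pi^G}$. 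Then \eqref{hyp2} follows from \eqref{market.coef.integr} and the boundedness of $\gamma^G$, since $\pi^G$ is a bounded multiple of integrable data; with \eqref{hyp2} in force the stochastic integral $\int_0^T\ln(1+\pi_s^G\theta_s)\,d\widehat N_s$ has zero mean, the fact already used to pass to \eqref{wealth.G}. Thus the pathwise maximisation is valid inside the expectation, $\pi^G$ is admissible, and it is optimal, proving the proposition; the differentiation producing \eqref{pi.pois.eq} is routine, whereas this verification of \eqref{hyp2} and of the uniform bound $1+\pi_t^G\theta_t>\epsilon^{\pi^G}$ is the delicate part.
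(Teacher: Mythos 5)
Your derivation reaches the correct first-order condition and the correct formula, but by a route that differs from the paper's: the paper runs a perturbation (calculus-of-variations) argument, defining $I(\epsilon)$ for a perturbed strategy $\pi+\epsilon\beta$, imposing $I'(0)=0$, and then localizing with $\beta_s=\xi\bOne_{\{t\leq s<t+h\}}$, $\xi$ bounded and $\cF_t^{\sigma(G)}$-measurable, before letting $h\to 0$ to obtain the pointwise condition $0=\mu_s-\rho_s+\lambda_s(1+\gamma_s^G)\theta_s/(1+\pi_s\theta_s)-\lambda_s\theta_s$; you instead maximise the integrand of \eqref{wealth.G} pathwise, using strict concavity of $x\mapsto\rho_s+x(\mu_s-\rho_s)+\lambda_s(1+\gamma_s^G)\ln(1+x\theta_s)-\lambda_s x\theta_s$ on $\{1+x\theta_s>0\}$. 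Your route is more elementary and has the advantage of making the second-order (global optimality) issue explicit, which the paper does not check in this proposition; the paper's localization argument has the advantage of not needing to argue that supremum and integral/expectation can be interchanged, since it only ever produces a necessary condition satisfied by the optimum.

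One caveat on the verification you add at the end, which goes beyond what the paper proves but contains overclaims. Boundedness of $\gamma^G$ does not follow from the boundedness of $\varphi$: by Theorem~\ref{alpha.binary}, $\gamma_t^G=\varphi_t\,(G-\EE[G|\cF_t^N])/\VV[G|\cF_t^N]$, and the conditional variance in the denominator typically tends to $0$ as $t\uparrow T$ (the binary outcome becomes known), so $\gamma^G$ is in general unbounded and \eqref{hyp2} cannot be dismissed as "a bounded multiple of integrable data". Likewise, $1+\gamma_t^G>0$ can fail: in Example~\ref{Example.1}, on the event $\{G=1,\,N_t=b\}$ one has $\gamma_t^1=-1$, so strict concavity degenerates there and the claimed identity $1+\pi_t^G\theta_t=\lambda_t(1+\gamma_t^G)\theta_t/(\lambda_t\theta_t-(\mu_t-\rho_t))$ gives $1+\pi_t^G\theta_t=0$, so no uniform $\epsilon^{\pi^G}$ as in \eqref{hyp3} can be extracted this way. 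These degeneracies equally affect the paper's formal argument (its first-order condition has no solution when $1+\gamma_s^G=0$), so they do not make your derivation of \eqref{pi.pois.eq} worse than the paper's, but the admissibility discussion should be stated as conditional on $1+\gamma^G>0$ and suitable integrability, not as a consequence of the standing assumptions.
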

\begin{proof}
    We apply a standard perturbation argument. 
    Let $\beta\in\cA(\bG)$ be a bounded strategy and let $\epsilon >0$. 
    Then we define
    $$ I(\epsilon) =\int_0^T\EE\left[ 
    (\pi_s+\epsilon\beta_s)(\mu_s-\rho_s) + \lambda_s(1+\gamma_s^G)\ln(1+(\pi_s+\epsilon\beta_s)\theta_s)-\lambda_s(\pi_s+\epsilon\beta_s)\theta_s\right]ds. $$
    We impose $I'(0) = 0$ in order to get the optimality condition. 
    \begin{align*}
        0 &= I'(0) = \lim_{\epsilon\to 0}\frac{I(\epsilon)-I(0)}{\epsilon} \\
        &=  \lim_{\epsilon\to 0}\frac{\int_0^T \EE\left[\epsilon\beta_s(\mu_s-\rho_s)  +\lambda_s(1+\gamma_s^G)\ln\left(1+\epsilon\frac{\beta_s\theta_s}{1+\pi_s\theta_s}\right) -\epsilon\lambda_s\beta_s\theta_s\right] ds}{\epsilon}\\
        &= \int_0^T\EE\left[\beta_s(\mu_s-\rho_s)  +\lambda_s(1+\gamma_s^G)\frac{\beta_s\theta_s}{1+\pi_s\theta_s} -\lambda_s\beta_s\theta_s\right] ds\ .
    \end{align*}
    We take $\beta_s = \xi \bOne_{\{t\leq s < t+h\}}$ being $\xi$ a bounded and $\cG_t$-measurable random variable and $I'(0)$ can be rewritten as follows,
    \begin{equation*}
       0 = I'(0) = \int_t^{t+h}\EE\left[\xi\left(\mu_s-\rho_s  +\frac{\lambda_s(1+\gamma_s^G)\theta_s}{1+\pi_s\theta_s} -\lambda_s\theta_s\right)\right] ds\ .
    \end{equation*}
    Then, we conclude that
    \begin{equation*}
       0 = \int_t^{t+h}\EE\left[\mu_s-\rho_s  +\frac{\lambda_s(1+\gamma_s^G)\theta_s}{1+\pi_s\theta_s} -\lambda_s\theta_s\vert \cG_t\right]  ds 
    \end{equation*}
    where the last step is due that $\xi$ is any $\cG_t$-measurable random variable.
    Taking $h\to 0$ we get the condition
    $$0 = \mu_s-\rho_s  +\frac{\lambda_s(1+\gamma_s^G)\theta_s}{1+\pi_s\theta_s} -\lambda_s\theta_s $$
    and the result holds true.
\end{proof}
Note that the strategy \eqref{pi.pois.eq} is well-defined thanks to the no-arbitrage assumption given in~\eqref{hyp4}.
\begin{lemma}
The information drift is orthogonal to the information flow $\bF$, i.e.,
$$ \EE\left[\gamma^G_t\vert \cF_t\right] = 0 $$
\end{lemma}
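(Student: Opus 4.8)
The identity will follow directly from the explicit formula for the information drift established in Theorem~\ref{alpha.binary}, so no new machinery is needed. Since we are in the purely jump setting of this section, $\cF_t=\cF_t^N\vee\cN$ and conditioning on $\cF_t$ amounts to conditioning on $\cF_t^N$. I would start from
\begin{equation*}
  \gamma_t^G=\varphi_t\,\frac{G-\EE[G|\cF^N_t]}{\VV[G|\cF^N_t]}\ ,
\end{equation*}
and observe that every factor here except $G$ itself is $\cF_t$-measurable: the non-anticipative derivative $\varphi_t$ is predictable, hence $\cF_t$-measurable, while $\EE[G|\cF^N_t]$ and $\VV[G|\cF^N_t]$ are $\cF^N_t$-measurable by construction.

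The main step is then simply to take $\EE[\,\cdot\,|\cF_t]$ on both sides, pull out the $\cF_t$-measurable factors, and use that $\EE\bigl[G-\EE[G|\cF^N_t]\,\big|\,\cF_t\bigr]=0$, which gives $\EE[\gamma_t^G|\cF_t]=0$. As an alternative route I would use the representation $\gamma_t^g=\varphi_t/(\PP(G=0|\cF^N_t)-g)$ from the Remark after Theorem~\ref{alpha.binary}: writing $q_t:=\PP(G=0|\cF^N_t)$ and using that, conditionally on $\cF_t$, $G$ equals $1$ with probability $1-q_t$ and $0$ with probability $q_t$, one computes $\EE[(q_t-G)^{-1}|\cF_t]=(1-q_t)/(q_t-1)+q_t/q_t=0$, whence $\EE[\gamma_t^G|\cF_t]=\varphi_t\,\EE[(q_t-G)^{-1}|\cF_t]=0$ at once.

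The only genuinely needed hypothesis — and the single point I would be careful about — is that the quotient defining $\gamma_t^G$ is well defined, i.e.\ that $\VV[G|\cF^N_t]>0$ (equivalently $0<\PP(G=1|\cF^N_t)<1$) $\PP$-almost surely for $t<T$; this is where the binary structure together with the \emph{Jacod hypothesis} enters, the latter ensuring $G$ is not $\cF_t^N$-measurable for $t<T$. Apart from that there is no real obstacle: the substance of the lemma is the conceptual statement that the extra drift $\lambda_t\gamma_t^G$ perceived by the $\bG$-agent is, on average over the information $\sigma(G)$ not yet available to the $\bF$-agent, invisible to the latter — consistent with $N-\int_0^\cdot\lambda_s\,ds$ remaining the $\bF$-compensated martingale while $\widehat N$ is the $\bF^{\sigma(G)}$ one.
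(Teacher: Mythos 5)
Your proof is correct and follows essentially the same route as the paper, whose proof is simply the remark that the identity "comes directly from Theorem \ref{alpha.binary}": you pull the $\cF_t$-measurable factors $\varphi_t$, $\EE[G|\cF^N_t]$ and $\VV[G|\cF^N_t]$ out of the conditional expectation and use $\EE\bigl[G-\EE[G|\cF^N_t]\,\big|\,\cF_t\bigr]=0$. Your extra remarks (the alternative computation via $\PP(G=0|\cF^N_t)$ and the caveat that $\VV[G|\cF^N_t]>0$ for $t<T$) are consistent with the paper's setting and add nothing that conflicts with its argument.
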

\begin{proof}
    It comes directly from Theorem \ref{alpha.binary}.
\end{proof}
\begin{theorem}\label{theo.gain.pois}
Let $\bG\supset\bF$ be the initial enlargement with $G$ a binary random variable, then,
\begin{equation}\label{gain.entropy}
\bV_T^{\bG} - \bV_T^{\bF} = \int_0^T \EE\left[   \lambda_s(1+\gamma_s^G) \ln\left(\lambda_s(1+\gamma_s^G) \right) -\lambda_s\ln\lambda_s\right] ds \geq 0 \ .
\end{equation}
\end{theorem}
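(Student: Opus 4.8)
The plan is to combine the two expressions for expected log-wealth already derived: the base value $\bV_T^{\bF}$ from the optimal Merton strategy \eqref{merton.poisson}, and the enlarged value $\bV_T^{\bG}$ obtained by plugging the optimal strategy $\pi^G$ of Proposition \ref{pi.pois} into \eqref{wealth.G}. First I would substitute $\pi_t^G$ from \eqref{pi.pois.eq} into the integrand of \eqref{wealth.G}; the key algebraic observation is that the optimality condition derived at the end of the proof of Proposition \ref{pi.pois}, namely
\begin{equation*}
\mu_s-\rho_s-\lambda_s\theta_s +\frac{\lambda_s(1+\gamma_s^G)\theta_s}{1+\pi_s^G\theta_s}=0\ ,
\end{equation*}
rearranges to $1+\pi_s^G\theta_s = \dfrac{\lambda_s(1+\gamma_s^G)\theta_s}{\lambda_s\theta_s-(\mu_s-\rho_s)}$, which makes the logarithmic term collapse to something of the form $\ln\bigl(\lambda_s(1+\gamma_s^G)\bigr)$ plus terms not depending on $\gamma^G$. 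Then the remaining linear-in-$\pi^G$ pieces $\pi_s^G(\mu_s-\rho_s)-\lambda_s\pi_s^G\theta_s = -\pi_s^G(\lambda_s\theta_s-(\mu_s-\rho_s))$ should simplify, again using the explicit form of $\pi_s^G$, to $\mu_s-\rho_s-\lambda_s\gamma_s^G\theta_s$ up to rearrangement; combining with the $-\lambda_s\theta_s$-type leftovers one gets an expression of the shape $\rho_s - (\mu_s-\rho_s)/\theta_s\cdot(\text{something}) + \lambda_s(1+\gamma_s^G)\ln(\lambda_s(1+\gamma_s^G)) - \lambda_s(1+\gamma_s^G)\ln(\lambda_s\theta_s-(\mu_s-\rho_s))$, and similarly (setting $\gamma^G\equiv 0$) for $\bV_T^{\bF}$.

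Next I would subtract the two. The crucial cancellation is that all the $\gamma^G$-independent pieces — the $\rho_s$ term, the $(\mu_s-\rho_s)/\theta_s$ term, and the $\ln(\lambda_s\theta_s-(\mu_s-\rho_s))$ term — must drop out. The $\rho_s$ and $(\mu_s-\rho_s)/\theta_s$ cancellations are immediate. For the logarithmic piece one needs $\EE[\lambda_s(1+\gamma_s^G)\ln(\lambda_s\theta_s-(\mu_s-\rho_s))] = \EE[\lambda_s\ln(\lambda_s\theta_s-(\mu_s-\rho_s))]$, i.e. $\EE[\lambda_s\gamma_s^G\ln(\lambda_s\theta_s-(\mu_s-\rho_s))]=0$; since $\lambda_s$, $\theta_s$, $\rho_s$, $\mu_s$ are $\cF_s$-measurable (càglàd $\bF$-adapted) and, by the Lemma just preceding the theorem, $\EE[\gamma_s^G\mid\cF_s]=0$, conditioning on $\cF_s$ kills this term. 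The same orthogonality handles any other stray $\lambda_s\gamma_s^G\times(\cF_s\text{-measurable})$ term. What survives is exactly
\begin{equation*}
\int_0^T \EE\bigl[\lambda_s(1+\gamma_s^G)\ln(\lambda_s(1+\gamma_s^G)) - \lambda_s\ln\lambda_s\bigr]\,ds\ ,
\end{equation*}
which is \eqref{gain.entropy}.

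For the nonnegativity, I would invoke convexity of $x\mapsto x\ln x$ on $(0,\infty)$: pointwise in $s$, conditioning on $\cF_s$ and using $\EE[\lambda_s(1+\gamma_s^G)\mid\cF_s]=\lambda_s$ (again by the orthogonality lemma, $\lambda_s$ being $\cF_s$-measurable), Jensen's inequality gives $\EE[\lambda_s(1+\gamma_s^G)\ln(\lambda_s(1+\gamma_s^G))\mid\cF_s]\geq \lambda_s\ln\lambda_s$, hence the integrand is $\geq 0$ and so is the integral. One should also note $1+\gamma_s^G\geq 0$ (this is exactly the $\gamma^G_t\geq -1$ property verified in the examples and needed for $\ln(1+\pi_s\theta_s)$ to make sense), so the $x\ln x$ expressions are genuinely evaluated on the domain of convexity.

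The main obstacle I anticipate is the bookkeeping in the first step: carefully carrying the explicit rational expressions for $\pi_s^G$ and $\pi_s^*$ through the log and the linear terms, and checking that every term not proportional to $\lambda_s\gamma_s^G$ times an $\cF_s$-measurable quantity cancels exactly between $\bV_T^{\bG}$ and $\bV_T^{\bF}$ — it is easy to mismatch a sign or a factor of $\theta_s$. A cleaner route, which I would actually prefer, is to avoid the messy substitution altogether: use the optimality identity to write, for any admissible $\pi$, $\lambda_s\ln(1+\pi_s\theta_s)$ plus the linear terms in terms of $\lambda_s(1+\gamma_s^G)$ directly, so that the optimal value is read off as a relative-entropy-type expression; this isolates the $\gamma^G$-dependence from the start and makes both the cancellation and the Jensen step transparent, with the orthogonality lemma doing all the real work.
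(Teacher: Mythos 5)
Your proposal is correct and, for the value computation itself, follows essentially the paper's route: substitute the optimal $\pi^G$ of Proposition \ref{pi.pois} into \eqref{wealth.G}, use the first-order condition to rewrite $\ln(1+\pi_s^G\theta_s)=\ln\bigl(\lambda_s(1+\gamma_s^G)\bigr)-\ln\bigl(\lambda_s-(\mu_s-\rho_s)/\theta_s\bigr)$, subtract $\bV_T^{\bF}$, and kill every term of the form $\lambda_s\gamma_s^G\times(\cF_s\text{-measurable})$ by the orthogonality lemma $\EE[\gamma_s^G\,|\,\cF_s]=0$; this is exactly what the paper does. The one genuine divergence is the nonnegativity: the paper argues pointwise, via $h(x,y)=-xy+x(1+y)\ln(1+y)\geq 0$ for $x>0$, $y>-1$, applied to the intermediate expression $\EE[-\lambda_s\gamma_s^G+\lambda_s(1+\gamma_s^G)\ln(1+\gamma_s^G)]$, whereas you condition on $\cF_s$, use $\EE[\lambda_s(1+\gamma_s^G)\,|\,\cF_s]=\lambda_s$ and Jensen for the convex map $x\mapsto x\ln x$; both are valid and essentially two packagings of the same convexity fact, your version being the cleaner ``relative entropy $\geq 0$'' statement (and correctly flagged as needing $1+\gamma_s^G\geq 0$). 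Two minor bookkeeping slips, neither fatal: the linear terms actually simplify to $\pi_s^G(\mu_s-\rho_s-\lambda_s\theta_s)=-\lambda_s\gamma_s^G-(\mu_s-\rho_s)/\theta_s$, not to $\mu_s-\rho_s-\lambda_s\gamma_s^G\theta_s$ (you hedge this, and the surviving terms you list are right); and you should carry the log remainder as $\ln\bigl(\lambda_s-(\mu_s-\rho_s)/\theta_s\bigr)$, which is positive under \eqref{hyp4}, rather than $\ln\bigl(\lambda_s\theta_s-(\mu_s-\rho_s)\bigr)$, whose argument can be negative when $\theta_s<0$ --- in either form it is $\cF_s$-measurable, so the orthogonality cancellation you invoke goes through unchanged.
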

\begin{proof}
    By \eqref{wealth.G} we have
    \begin{equation}\label{gain.G}
        \bV_T^\bG = \int_0^T \EE\left[ \rho_s + \pi_s^G(\mu_s-\rho_s)  + \lambda_s(1+\gamma_s^G)\ln(1+\pi_s^G\theta_s) -\lambda_s\pi_s^G\theta_s\right] ds\ ,
    \end{equation}
    being $\pi^G$ the process defined in the Proposition \ref{pi.pois}.
    We compute the following terms
    \begin{align*}
    \pi_s^G\left(\mu_s-\rho_s-\lambda_s\theta_s\right) &= -\lambda_s\gamma_s^G - \frac{\mu_s-\rho_s}{\theta_s}\\
    \ln\left(1+\pi_s^G\theta_s\right)&= \ln\left(  \frac{\lambda_s(1+\gamma_s^G)}{\lambda_s-(\mu_s-\rho_s)/\theta_s} \right)\ .
    \end{align*}
    By substituting them in \eqref{gain.G} we have the following expression, 
    \begin{align*}
    \bV_T^\bG =& \int_0^T \EE\left[\rho_s - \lambda_s\gamma_s^G - \frac{\mu_s-\rho_s}{\theta_s} +\lambda_s(1+\gamma_s^G)  \ln\left(  \frac{\lambda_s(1+\gamma_s^G)}{\lambda_s-(\mu_s-\rho_s)/\theta_s} \right)  \right] ds
    \end{align*}
    Finally, we compute the difference as follows
    \begin{align*}
    \bV_T^\bG-\bV_T^\bF=&\int_0^T \EE\left[-  \lambda_s\gamma_s^G +\lambda_s(1+\gamma_s^G)  \ln\left(  \frac{\lambda_s(1+\gamma_s^G)}{\lambda_s-(\mu_s-\rho_s)/\theta_s} \right)\right.\\
    &\left.-\lambda_s \ln\left(  \frac{\lambda_s}{\lambda_s-(\mu_s-\rho_s)/\theta_s} \right) \right] ds\\
    =& \int_0^T \EE\left[  \lambda_s\gamma_s^G +\lambda_s(1+\gamma_s^G)  \ln\left(  1+\gamma_s^G \right) \right] ds \geq 0
    \end{align*}
    where we have applied the tower property of the conditional expectation in order to simplify the expression as follows
    \begin{align*}
        \EE\left[\gamma_s^G\ln\left(  \frac{\lambda_s}{\lambda_s-(\mu_s-\rho_s)/\theta_s} \right) \right] &= \EE\left[\EE\left[\gamma_s^G\ln\left(  \frac{\lambda_s}{\lambda_s-(\mu_s-\rho_s)/\theta_s} \right) \vert \cF_s\right]\right] \\
        &=  \EE\left[\ln\left(  \frac{\lambda_s}{\lambda_s-(\mu_s-\rho_s)/\theta_s} \right) \EE\left[\gamma_s^G\vert \cF_s\right]\right] = 0\ .
    \end{align*}
    It can be checked that the function $h(x,y)=-xy+x(1+y)\ln(1+y)$ is positive when $x>0$ and $y>-1$ and we get the non-negativity of the additional gains.
    Finally by applying again the tower property to the term $-\lambda_s\gamma_s^G$ and by adding and subtracting the term $\lambda_s(1+\gamma_s^G)\ln \lambda_s$ we get the result.
\end{proof}
\begin{remark}
Note that Theorem \ref{theo.gain.pois} holds true not only for binary random variables but for any random variable $G$ that satisfies $\EE[\gamma_t^G\vert \cF_t] = 0 $.
\end{remark}
\section{Mixed Brownian-Poisson market}\label{sec:Brow-Pois}
Using the Clark-Ocone formula stated in Equation \eqref{eq.clark-ocone}, 
we can apply the same representation formula for every $\cF_T$-measurable random variable $G\in L^2(\PP)$,
$$ G = \EE[G] + \int_0^T \phi_s d W_s + \int_0^T \varphi_s d\tilde N_s\ , $$
where the processes $\phi$ and $\varphi$ are related to the Malliavin derivative.
Let \hbox{$B\in\cB(\bR)$} be a subset and we consider the following PRP
\begin{equation}\label{PRP.B.mixed}
    \bOne_{\{G\in B\}} = \PP^G(B) + \int_0^T \phi_s(B)dW_s + \int_0^T \varphi_s(B)d\tilde N_s\ .
\end{equation}
\begin{assumption}
  The processes $\phi,\varphi:[0,T]\times\cB(\bR) \longrightarrow L^2(dt\times\PP,\bF)$ are bounded \hbox{$\PP$-almost} surely.
\end{assumption}
We fix $g$ in the support of $G$, not necessary binary yet,
by reasoning as in the beginning of Section \ref{sec:initial-enlarg}, 
and we know that there exist two $\bF$-adapted processes $\zeta^g,\,\psi^g$ with $g\in Supp(G)$ such that,
\begin{equation}\label{PRP.binary.mixed}
    \bOne_{\{G\in dg\}} 
    = \PP^G(dg)+ \int_0^T \zeta^g_s \PP^G(dg) dW_s + \int_0^T \psi^{g}_s \PP^G(dg) d\tilde N_s\ . 
\end{equation}
where $\phi^{g}_t=\zeta_t^g\PP(G=g)$ and
$\varphi^{g}_t=\psi_t^g\PP(G=g)$ in the PRP \eqref{PRP.binary.mixed} when $G$ is purely atomic.
\begin{lemma}\label{mixed.compens}
  Let $G\in L^2(\PP)$ be an $\cF_T$-measurable random variable satisfying the \emph{Jacod hypothesis},
  then the processes $\alpha^G$ and $\gamma^G$ defined as
    \begin{equation}
        \alpha_t^G := \frac{\zeta^{G}_t}{p_t^G}\ ,\quad
        \gamma_t^G := \frac{\psi^{G}_t}{p_t^G} \ ,\quad 0\leq t < T\ ,
        \end{equation}
        satisfy that $W_{\cdot}-\int_0^{\cdot} \alpha^G_s ds$ and $ N_{\cdot} - \int_0^{\cdot}\lambda_s(1+\gamma_s^G) ds$ are $\bG$-local martingales.
    \end{lemma}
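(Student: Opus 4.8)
The plan is to follow the proof of Lemma~\ref{pois.drift}, now carrying both the Brownian and the Poisson stochastic integrals along and using the orthogonality of $W$ and $\tilde N$ under $\bF$ to split the statement into its two parts.

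First I would identify the conditional density process $p^g$. Conditioning the PRP~\eqref{PRP.binary.mixed} on $\cF_t$ and using that, by the boundedness assumption on $\phi$ and $\varphi$, the two stochastic integrals are genuine $\bF$-martingales (so that taking $\EE[\,\cdot\,|\cF_t]$ merely truncates them at $t$), one obtains
\begin{equation*}
p_t^g = \frac{\EE[\bOne_{\{G\in dg\}}\mid\cF_t]}{\PP^G(dg)} = 1 + \int_0^t \zeta_s^g\,dW_s + \int_0^t \psi_s^g\,d\tilde N_s\ ,
\end{equation*}
and hence, in the \Ito sense, $dp_t^g = \zeta_t^g\,dW_t + \psi_t^g\,d\tilde N_t$.

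Next I would compute the two predictable covariations with respect to $\bF$. Since $W$ is continuous and $W$, $\tilde N$ are orthogonal $\bF$-local martingales, $\langle W,\tilde N\rangle^\bF\equiv 0$, while $\langle W,W\rangle^\bF_t=t$ and $\langle \tilde N,\tilde N\rangle^\bF_t=\int_0^t\lambda_s\,ds$ as in Lemma~\ref{pois.drift} (Theorem~9.2.2.1 of \cite{Jeanblanc2009mathematical}). Therefore
\begin{equation*}
\langle p^g,W\rangle^\bF_t = \int_0^t \zeta_s^g\,ds\ ,\qquad \langle p^g,\tilde N\rangle^\bF_t = \int_0^t \psi_s^g\lambda_s\,ds\ .
\end{equation*}
Applying Proposition~\ref{prop.jacod.semimartingale} to the $\bF$-local martingale $W$ yields that $W_t-\int_0^t \frac{d\langle W,p^g\rangle^\bF_s}{p_{s-}^g}=W_t-\int_0^t \frac{\zeta_s^g}{p_{s-}^g}\,ds = W_t-\int_0^t\alpha_s^g\,ds$ is an $\bF^{\sigma(G)}$-local martingale (using $p_{s-}^g=p_s^g$ for $ds$-a.e.\ $s$). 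Applying it to $\tilde N$ yields that $\tilde N_t-\int_0^t \frac{\psi_s^g\lambda_s}{p_{s-}^g}\,ds = \tilde N_t-\int_0^t\lambda_s\gamma_s^g\,ds$ is an $\bF^{\sigma(G)}$-local martingale; since $\tilde N_t-\int_0^t\lambda_s\gamma_s^g\,ds = N_t-\int_0^t\lambda_s(1+\gamma_s^g)\,ds$, this is exactly the second claimed process, and the lemma follows for every $g\in Supp(G)$.

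The only delicate points are those already present in Lemma~\ref{pois.drift}: justifying that the PRP integrals are true (not merely local) martingales, which is where the boundedness assumption on $\phi,\varphi$ enters, and that $p^g$ stays strictly positive so that $\alpha^g$ and $\gamma^g$ are well defined — both standard consequences of the Jacod hypothesis. The genuinely new ingredient relative to the purely-jump case is just the orthogonality $\langle W,\tilde N\rangle^\bF=0$, which decouples $dp^g$ into a Brownian term contributing only to $\langle p^g,W\rangle^\bF$ and a jump term contributing only to $\langle p^g,\tilde N\rangle^\bF$, thereby giving the two independent statements.
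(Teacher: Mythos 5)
Your proof is correct and follows essentially the same route as the paper: condition the PRP \eqref{PRP.binary.mixed} to get $dp_t^g=\zeta_t^g\,dW_t+\psi_t^g\,d\tilde N_t$, use $\langle W,\tilde N\rangle^\bF=0$ to split the brackets, and apply Proposition~\ref{prop.jacod.semimartingale} componentwise as in Lemma~\ref{pois.drift}. You merely spell out explicitly the steps the paper compresses into ``the result follows by applying the same reasoning as in Lemma~\ref{pois.drift}.''
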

\begin{proof}
    As before, we compute the process $p_t^g$,
\begin{align*}
        p_t^g &= \frac{\PP^G(dg\vert \cF_t)}{\PP^G(dg)} = \frac{\EE[\bOne_{\{G\in dg\}}\vert \cF_t]}{\PP^G(dg)} = 1 + \frac{\int_0^t \zeta^g_s \PP^G(dg) dW_s + \int_0^t \psi^{g}_s \PP^G(dg) d\tilde N_s}{\PP^G(dg)}\ .
    \end{align*}
    By the orthogonality of $W$ and $\tilde N$ we deduce that 
    $\langle W,\tilde N\rangle_t^\bF = 0$ 
    and the result follows by applying the same reasoning as in Lemma \ref{pois.drift}.
\end{proof}
\begin{theorem}\label{alpha.binary.compound}
  If $G$ is a binary random variable, then
  $$ \alpha_t^G = \EE\left[D_t G\vert \cF_t\right]\frac{G-\EE[G\vert \cF_t]}{\VV[G\vert \cF_t]}\ ,\quad \gamma_t^G = \EE\left[D_{t,1} G\vert \cF_t\right] \frac{G-\EE[G\vert \cF_t]}{\VV[G\vert \cF_t]}\ .$$
\end{theorem}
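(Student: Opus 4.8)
The strategy is to mimic the argument already carried out in Theorem~\ref{alpha.binary} for the purely-jump case, now keeping track of \emph{both} stochastic integrals in the Clark--Ocone representation. The starting point is Lemma~\ref{mixed.compens}, which tells us that $\alpha_t^g = \zeta_t^g/p_t^g$ and $\gamma_t^g = \psi_t^g/p_t^g$, where $\zeta^g$ and $\psi^g$ are the densities appearing in the PRP~\eqref{PRP.binary.mixed}. So it suffices to identify the numerators $\zeta_t^G,\ \psi_t^G$ and the denominator $p_t^G$ explicitly when $G$ is binary.

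\medskip

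First I would fix $G$ binary, so $\mathrm{Supp}(G)=\{0,1\}$ and $G=\bOne_{\{G=1\}}$. Applying the Clark--Ocone formula~\eqref{eq.clark-ocone} to $G$ itself gives the representation
$$ G = \EE[G] + \int_0^T \EE[D_tG\mid\cF_t]\,dW_t + \int_0^T \EE[D_{t,1}G\mid\cF_t]\,d\tilde N_t\ , $$
and by uniqueness of the PRP (comparing with~\eqref{PRP.binary.mixed} for $g=1$, where $\PP(G=1)=\EE[G]$) we read off $\phi^1_t = \EE[D_tG\mid\cF_t]$ and $\varphi^1_t = \EE[D_{t,1}G\mid\cF_t]$; equivalently $\zeta^1_t\PP(G=1)=\EE[D_tG\mid\cF_t]$ and $\psi^1_t\PP(G=1)=\EE[D_{t,1}G\mid\cF_t]$. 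Since $\bOne_{\{G=0\}}=1-\bOne_{\{G=1\}}$ is an $\cF_T$-measurable constant plus $-G$, its representation has densities $\phi^0=-\phi^1$, $\varphi^0=-\varphi^1$, exactly as noted in the proof of Theorem~\ref{alpha.binary}.

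\medskip

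Next I would compute $p_t^g$. Taking conditional expectations in~\eqref{PRP.binary.mixed} and using that the stochastic integrals are $\bF$-martingales,
$$ p_t^g = \frac{\EE[\bOne_{\{G\in dg\}}\mid\cF_t]}{\PP^G(dg)} = \frac{\PP(G=g\mid\cF_t)}{\PP(G=g)}\ , \quad g\in\{0,1\}\ , $$
so in particular $p_t^1 = \PP(G=1\mid\cF_t)/\PP(G=1) = \EE[G\mid\cF_t]/\EE[G]$. Plugging into $\alpha_t^G = \zeta_t^G/p_t^G$ at $g=1$ (noting $G=1$ on the relevant event) yields
$$ \alpha_t^G = \frac{\EE[D_tG\mid\cF_t]/\PP(G=1)}{\EE[G\mid\cF_t]/\PP(G=1)}\cdot\frac{?}{?}\,, $$
which is not yet the claimed form; the final algebraic step is to rewrite $1/\EE[G\mid\cF_t]$ as $(G-\EE[G\mid\cF_t])/\VV[G\mid\cF_t]$ \emph{on each atom}. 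Concretely, since $G$ is $\{0,1\}$-valued one has $\VV[G\mid\cF_t]=\EE[G\mid\cF_t](1-\EE[G\mid\cF_t])$, hence on $\{G=1\}$
$$ \frac{G-\EE[G\mid\cF_t]}{\VV[G\mid\cF_t]} = \frac{1-\EE[G\mid\cF_t]}{\EE[G\mid\cF_t](1-\EE[G\mid\cF_t])} = \frac{1}{\EE[G\mid\cF_t]}\ , $$
and on $\{G=0\}$ it equals $-1/(1-\EE[G\mid\cF_t])$, which matches $\zeta^0_t/p^0_t$ with $\zeta^0=-\zeta^1$, $p^0_t=(1-\EE[G\mid\cF_t])/\PP(G=0)$. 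Assembling the two atoms into a single $\cF_t^{\sigma(G)}$-measurable expression gives precisely $\alpha_t^G = \EE[D_tG\mid\cF_t]\,(G-\EE[G\mid\cF_t])/\VV[G\mid\cF_t]$, and the identical computation with $D_{t,1}$ in place of $D_t$ (and $\varphi$, $\psi$ in place of $\phi$, $\zeta$) gives the formula for $\gamma_t^G$.

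\medskip

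The only real subtlety — and the step I would treat most carefully — is the bookkeeping of which atom one is on when collapsing the two cases $g=0,1$ into one formula valid as an $\bF^{\sigma(G)}$-adapted process: one must verify that $\gamma_t^g$ evaluated at $g=G(\omega)$ produces the stated expression, using $\phi^1=-\phi^0$, $\varphi^1=-\varphi^0$ and the two-point identity for $\VV[G\mid\cF_t]$ above. Everything else is routine substitution, and the Brownian and Poisson components decouple because $\langle W,\tilde N\rangle^\bF=0$ (already exploited in Lemma~\ref{mixed.compens}), so the two formulas are proved by the same one-variable argument.
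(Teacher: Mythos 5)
Your proposal is correct and follows essentially the same route as the paper, which proves the mixed case exactly as Theorem~\ref{alpha.binary}: uniqueness of the representation (Clark--Ocone/PRP) gives $\zeta^1=-\zeta^0$, $\psi^1=-\psi^0$ with $\zeta^1_t\PP(G=1)=\EE[D_tG\mid\cF_t]$, $\psi^1_t\PP(G=1)=\EE[D_{t,1}G\mid\cF_t]$, while $p_t^g=\PP(G=g\mid\cF_t)/\PP(G=g)$, and the atom-by-atom identities $\EE[G\mid\cF_t]=\PP(G=1\mid\cF_t)$, $\VV[G\mid\cF_t]=\PP(G=1\mid\cF_t)(1-\PP(G=1\mid\cF_t))$ yield the stated formulas via Lemma~\ref{mixed.compens}. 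Your extra atom-wise verification simply fills in details the paper leaves implicit; there is no gap.
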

When we consider a $\bG$-agent playing with
$\pi\in\cA(\bG)$, the \Ito integral fails for both the Brownian motion and the Poisson process.
Using the \emph{Jacod Hypothesis} and the 
Theorem \ref{alpha.binary.compound} we can define the following 
$\bG$-local martingales
\begin{equation}
    \widehat{W}_t:= W_t-\int_0^t \alpha^G_s ds\ ,\quad \widehat{N}_t:= N_t - \int_0^t \lambda_s(1+\gamma_s^G)ds\ ,\quad 0\leq t \leq T\ .
\end{equation}
The dynamics of the wealth process satisfy the following SDE,
\begin{equation}\label{new.dynamics.compound}
        \frac{dX_t^\pi}{X_t^{\pi}} = \left((1-\pi_t)\rho_t+ \pi_t\mu_t + \pi_t\sigma_t\alpha_t^G+ \pi_t\theta_t\lambda_t\gamma_t^G \right)dt + \pi_t\sigma_t d\widehat{W}_t + \pi_t\theta_t d\widehat{N}_t \ ,\quad X_0 = x_0 \ ,
\end{equation}
and we get the following explicit solution
\begin{align}\label{solution.forward.X}
\ln \frac{X_T^\pi}{x_0} =& \int_0^T \rho_s + \pi_s(\mu_s-\rho_s+\sigma_s\alpha_s^G)- \frac{1}{2}\pi^2_s\sigma^2_s + \lambda_s(1+\gamma_s^G)\ln(1+\pi_s\theta_s)-\lambda_s\pi_s\theta_sds\notag \\
&+\int_0^T\pi_s\sigma_sd\widehat{W}_s+ \int_0^T \ln(1+\pi_s\theta_s) d\widehat{N}_s\ .
\end{align}
Finally, using the integrability conditions, we can compute the expectation of the stochastic integrals and we get,
\begin{align*}
\EE\left[\ln \frac{X^{\pi}_T}{x_0}\right] =& \int_0^{T}\EE\left[ \rho_s + \pi_s (\mu_s-\rho_s+\alpha^G_s\sigma_s)- \frac{1}{2}\pi^2_s\sigma^2_s\right.\\ 
&+ \left. \lambda_s(1+\gamma_s^G)\ln(1+\pi_s\theta_s)-\lambda_s\pi_s\theta_s \right]ds\ .
\end{align*}
\begin{proposition}
The optimal strategy of the problem given by \eqref{optimization.problem} with G binary, information flow $\bG$ and both Brownian and Poisson noises is given by,
\begin{align}\label{optimal.portfolio.compound}
    \pi_s =& \frac{1}{2}\left(\frac{\mu_s-\rho_s+\alpha_s^G\sigma_s-\lambda_s\theta_s}{\sigma_s^2}-\frac{1}{\theta_s} \right)\notag\\
    &+\sgn(\theta_s)\frac{1}{2} \sqrt{\left(\frac{\mu_s-\rho_s+\alpha_s^G\sigma_s-\lambda_s\theta_s}{\sigma_s^2}+\frac{1}{\theta_s}\right)^2+ 4\lambda_s\frac{1 + \gamma^G_s}{\sigma_s^2}}
\end{align}
\end{proposition}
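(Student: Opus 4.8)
The plan is to repeat the perturbation argument already used in the proof of Proposition~\ref{pi.pois}, applied now to the objective functional displayed just above the statement, namely
\begin{equation*}
\EE\left[\ln\frac{X^\pi_T}{x_0}\right] = \int_0^T \EE\!\left[\rho_s + \pi_s(\mu_s-\rho_s+\alpha_s^G\sigma_s) - \tfrac12\pi_s^2\sigma_s^2 + \lambda_s(1+\gamma_s^G)\ln(1+\pi_s\theta_s) - \lambda_s\pi_s\theta_s\right] ds .
\end{equation*}
First I would note that the integrand depends on the control only through the value $\pi_s(\omega)$ and is strictly concave in that value on the admissible region $\{1+\pi_s\theta_s>0\}$: its second derivative there equals $-\sigma_s^2 - \lambda_s(1+\gamma_s^G)\theta_s^2(1+\pi_s\theta_s)^{-2}<0$, where I use that $\lambda_s(1+\gamma_s^G)$ is the nonnegative $\bF^{\sigma(G)}$-intensity of $N$ (Theorem~\ref{alpha.binary.compound}), so $1+\gamma_s^G\ge 0$. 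Hence a unique pointwise maximizer exists, it is $\bF^{\sigma(G)}$-adapted and càglàd, and (arguing as in Proposition~\ref{pi.pois}, using the no-arbitrage assumption~\eqref{hyp4}) admissible. Taking $\beta_s=\xi\bOne_{\{t\le s<t+h\}}$ with $\xi$ bounded and $\cF_t^{\sigma(G)}$-measurable, imposing $I'(0)=0$ and letting $h\to 0$ yields the first-order condition
\begin{equation*}
\mu_s-\rho_s+\alpha_s^G\sigma_s - \pi_s\sigma_s^2 + \frac{\lambda_s(1+\gamma_s^G)\theta_s}{1+\pi_s\theta_s} - \lambda_s\theta_s = 0 .
\end{equation*}

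Next I would clear the denominator $1+\pi_s\theta_s$ (legitimate on the admissible region), turning this into the quadratic
\begin{equation*}
\sigma_s^2\theta_s\,\pi_s^2 + (\sigma_s^2 - a_s\theta_s)\,\pi_s - \bigl(a_s + \lambda_s(1+\gamma_s^G)\theta_s\bigr) = 0,\qquad a_s := \mu_s-\rho_s+\alpha_s^G\sigma_s - \lambda_s\theta_s ,
\end{equation*}
and solve it by the quadratic formula. The key algebraic step is to complete the square in the discriminant: it equals $(\sigma_s^2+a_s\theta_s)^2 + 4\lambda_s(1+\gamma_s^G)\sigma_s^2\theta_s^2$. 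Factoring $\sigma_s^4\theta_s^2$ out of it and using $\sqrt{\theta_s^2}=|\theta_s|=\sgn(\theta_s)\theta_s$ produces the $\sgn(\theta_s)$ in front of the square root; rewriting the rational part $\tfrac{a_s\theta_s-\sigma_s^2}{2\sigma_s^2\theta_s}$ as $\tfrac12\bigl(\tfrac{\mu_s-\rho_s+\alpha_s^G\sigma_s-\lambda_s\theta_s}{\sigma_s^2}-\tfrac1{\theta_s}\bigr)$ then gives exactly~\eqref{optimal.portfolio.compound}, provided one selects the $+$ root in the quadratic formula.

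To justify that choice of root I would evaluate the quadratic $Q(\pi)$ at the boundary point $\pi=-1/\theta_s$ of the admissible region; a direct computation gives $Q(-1/\theta_s)=-\lambda_s(1+\gamma_s^G)\theta_s$. If $\theta_s>0$ the parabola opens upward and $Q(-1/\theta_s)\le 0$, so the region $\{\pi>-1/\theta_s\}$ contains exactly the larger root; if $\theta_s<0$ it opens downward and $Q(-1/\theta_s)\ge 0$, so $\{\pi<-1/\theta_s\}$ contains exactly the smaller root, which — because the leading factor $2\sigma_s^2\theta_s$ is then negative — again corresponds to $+\sqrt{\,\cdot\,}$ in the formula. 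In both cases the admissible critical point is the $+$ root, and the $\sgn(\theta_s)$ absorbs the change of orientation; together with the concavity above, this point is the maximizer. I expect the main obstacle to be purely bookkeeping: carrying out the discriminant simplification cleanly and tracking the sign of $\theta_s$ through the root selection (plus, for full rigour, checking that the pointwise optimizer satisfies~\eqref{hyp1}–\eqref{hyp4}, which goes exactly as in Proposition~\ref{pi.pois}).
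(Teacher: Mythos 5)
Your proposal is correct and takes essentially the same route as the paper: the identical perturbation argument leading to the pointwise first-order condition, the same quadratic in $\pi_s$, and selection of the admissible root via the constraint $1+\pi_s\theta_s>0$, with the sign of $\theta_s$ producing the $\sgn(\theta_s)$ in \eqref{optimal.portfolio.compound}. The only cosmetic differences are that you certify maximality by pointwise strict concavity of the integrand (rather than the paper's $I''(0)<0$) and choose the root by evaluating the quadratic at the boundary point $-1/\theta_s$ instead of computing $1+\pi_s^{\pm}\theta_s$ explicitly; both variants are sound.
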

\begin{proof}
    We proceed with a perturbation argument.
    Let $\beta\in\cA(\bG)$ a bounded strategy and let $\epsilon >0$.
    We define
    \begin{align*}
        I(\epsilon) :=& \EE\left[\int_0^T \rho_s + (\pi_s+\epsilon\beta_s)  (\mu_s-\rho_s+\alpha^G_s\sigma_s)- \frac{1}{2}(\pi_s+\epsilon\beta_s)^2_s\sigma^2_s \, ds\right]\\
        &+ \EE\left[\int_0^T \lambda_s(1+\gamma_s^G)\ln(1+(\pi_s+\epsilon\beta_s)\theta_s)-\lambda_s(\pi_s+\epsilon\beta_s)\theta_s\, ds\right]
    \end{align*}
    and we consider the first order condition $0 = I'(0)$ as follows,
    \begin{equation}\label{opt.cond.mixed}
        0 = 
        \EE\left[\int_0^T\beta_s\left( \mu_s-\rho_s+\alpha_s^G\sigma_s - \pi_s\sigma_s^2 + \lambda_s(1+\gamma_s^G)\frac{\theta_s}{1+\pi_s\theta_s} - \lambda_s\theta_s \right) ds\right]\ .
    \end{equation}
    Then we take $\beta_s = \xi\bOne_{\{t\leq s< t+h\}}$ with $\xi$ a $\cG_t$-measurable and bounded random variable.
    We can rewrite the Equation \eqref{opt.cond.mixed} in terms of conditional expectation as follows,
    $$0 = \EE\left[\int_t^{t+h}\left( \mu_s-\rho_s+\alpha^G_s\sigma_s - \pi_s\sigma_s^2 + \lambda_s(1+\gamma^G_s)\frac{\theta_s}{1+\pi_s\theta_s} - \lambda_s\theta_s \right) ds\vert \cG_t\right] \ ,$$
    for $0 < h < T-t$ arbitrarily near to zero, finally we get
    $$0 = \mu_s-\rho_s+\alpha^G_s\sigma_s - \pi_s\sigma_s^2 + \lambda_s(1+\gamma^G_s)\frac{\theta_s}{1+\pi_s\theta_s} - \lambda_s\theta_s\ . $$
    To short the notation, we define the terms $d_s := \mu_s-\rho_s+\alpha_s^G\sigma_s-\lambda_s\theta_s$ and \hbox{$c_s := \lambda_s(1+\gamma^G_s)$}.
    Then we derive the next equation,
    $$ 0 = \left(\sigma_s^2\theta_s\right) \pi_s^2 + \left(\sigma_s^2-\theta_sd_s\right)\pi_s - \left(d_s+\theta_sc_s\right)$$
    with the following solutions,
    \begin{align*}
        \pi^\pm_s &= \frac{\theta_s d_s - \sigma_s^2\pm \sqrt{\left( \theta_s d_s-\sigma_s^2 \right)^2 + 4\sigma_s^2\theta_s\left(d_s+\theta_sc_s\right)}}{2\sigma_s^2\theta_s}\\
        &= \frac{1}{2}\left(\frac{d_s}{\sigma_s^2}-\frac{1}{\theta_s} \right) \pm\frac{1}{2} \sqrt{\left(\frac{d_s}{\sigma_s^2}+\frac{1}{\theta_s}\right)^2+4 \frac{c_s}{\sigma_s^2}}\ ,
    \end{align*} 
    where in the last step we have used arithmetic computations.
    It can be verified that $I''(0)<0$ and the pair of strategies $\pi^\pm$ are maximum if and only if they are admissible.
    Then we need to check if the condition $1+\pi^{\pm}_s\theta_s > 0$ is satisfied.
    We rewrite the pair as follows,
    \begin{align*}
        1 + \pi_s\theta_s &= \frac{1}{2}\left( \frac{d_s\theta_s}{\sigma_s^2} + 1 \right) \pm \frac{1}{2} \frac{\theta_s}{\abs{\theta_s}}\sqrt{\left( \frac{d_s\theta_s}{\sigma_s^2} + 1 \right)^2 + 4\theta_s^2\frac{\lambda_s(1+\gamma_s^G)}{\sigma_s^2}}\\
        &= \frac{1}{2}\left( \frac{d_s\theta_s}{\sigma_s^2} + 1 \right) \pm \frac{1}{2} \sgn(\theta_s)\sqrt{\left( \frac{d_s\theta_s}{\sigma_s^2} + 1 \right)^2 + 4\theta_s^2\frac{\lambda_s(1+\gamma_s^G)}{\sigma_s^2}}
    \end{align*}
    We use the following fact from real analysis
    $$ f^+(x) = x + \sqrt{x^2 + a} > 0\ ,\quad  f^-(x) = x - \sqrt{x^2 + a} < 0 \ ,\quad \forall (x,a)\in \bR\times\bR^+\ ,$$
    and we deduce that on the set \hbox{$\{\theta_s > 0\}$} the unique optimal solution is $\pi^+$ and on 
    \hbox{$\{\theta_s < 0\}$} is $\pi^-$, $\forall s\in[0,T]$, then the result holds.
\end{proof}
\begin{remark}
Note that in the strategy \eqref{optimal.portfolio.compound} we find the usual Merton strategy for Brownian noise and the additional information,
$$ \pi_s^M = \frac{\mu_s-\rho_s+\alpha^G_s\sigma_s}{\sigma^2_s}\ . $$
The optimal strategy in the mixed market include the 
Poisson distortion with $\pm 1/\theta_s$ and the joint effect of the additional information on the Poisson process with the Brownian process, i.e.,
$$ \pi_s = \frac{1}{2}\left(\pi_s^M-\lambda_s\frac{\theta_s}{\sigma^2_s}-\frac{1}{\theta_s} \right)+ \sgn(\theta_s)\frac{1}{2} \sqrt{\left(\pi_s^M-\lambda_s\frac{\theta_s}{\sigma^2_s}+\frac{1}{\theta_s}\right)^2+ 4\lambda_s\frac{1+ \gamma^G_s}{\sigma_s^2}}\ . $$
\end{remark}
\begin{example}\label{Example.3}
Let $A = (-\infty,a]$ and $B=(-\infty,b]$ be two half-bounded intervals.
We define the binary random variable as the following product indicator,
$$ G =  \bOne_{\{W_T\leq a\}\times\{N_T\leq b\}} = \bOne_{\{W_T\leq a\}}\bOne_{\{N_T\leq b\}}\ . $$
In order to achieve explicit results, we assume that the intensity satisfies \hbox{$\lambda_t\in\cF_0$}, \hbox{$\forall t\in[0,T]$}, because in the most general case we can not compute explicitly the probabilities.

According to \cite{LeonSoleUtzetVives2002}, thanks to the independence of the Brownian motion and the Poisson process, the Malliavin derivatives in each direction can be easily computed as follows,
$$D_t G = \bOne_{\{N_T\leq b\}} D_t \bOne_{\{W_T\leq a\}}\ ,\quad D_{t,1} G = \bOne_{\{W_T\leq a\}} D_{t,1} \bOne_{\{N_T\leq b\}} $$
so we need to calculate the conditional expectation of these terms.
\begin{align*}
    \EE\left[D_{t,1} G\vert \cF_t\right] &= \EE\left[  \bOne_{\{W_T\leq a\}} D_{t,1} \bOne_{\{N_T\leq b\}}\vert \cF_t\right] = - \EE\left[  \bOne_{\{W_T\leq a\}}\bOne_{\{N_T = b\}}\vert \cF_t\right] \\
    &= - \EE\left[  \bOne_{\{W_T\leq a\}\times\{N_T = b\}}\vert \cF_t\right] =  - \PP\left(  \{W_T\leq a\}\cap\{N_T = b\}\vert \cF_t\right)\\
    &= -\left(\int_{-\infty}^{a}\frac{\exp\left(-\frac{(x-W_t)^2}{2(T-t)}\right)}{\sqrt{2\pi(T-t)}} dx\right) \left( e^{-\Lambda(t,T)}\frac{(\Lambda(t,T))^{b-N_t}}{(b-N_t)!} \bOne_{\{N_t\leq b\}}\right)\\
    \EE\left[D_t G \vert \cF_t\right] &= \EE\left[\bOne_{\{N_T\leq b\}} D_t \bOne_{\{W_T\leq a\}}\vert \cF_t\right] =  \EE\left[\bOne_{\{N_T\leq b\}} \delta_a(W_T)\vert \cF_t\right]\ ,
\end{align*}
where in the first computation we have used our Example \ref{Example.1} and for the second one we refer to \cite{Bermin02} for the generalized Malliavin derivative of the indicator function.
In order to compute the conditional expectation, we consider the following conditional distribution function,
$$ F(x,y):=\PP(W_T\leq x,N_T\leq y\vert \cF_t) = \int_{-\infty}^x \sum_{k = 0}^y f_{W_T\vert W_t}(u) p_{N_T\vert N_t}(k) du\ , $$
where $f_{W_T\vert W_t}(u)$ denotes the density function of $(W_T\vert W_t)$ and $p_{N_T\vert N_t}(k)$ the probability function of $(N_T\vert N_t)$.
Both of them are well-known.
Then,
\begin{align*}
    \EE\left[D_t G \vert \cF_t\right] &=  \int_{-\infty}^\infty \sum_{k = 0}^\infty f_{W_T\vert W_t}(u) p_{N_T\vert N_t}(k) \bOne_{\{k\in B\}} \delta_a(u) du\\ 
    &=  \int_{-\infty}^\infty  f_{W_T\vert W_t}(u)  \delta_a(u) du \sum_{k = 0}^\infty p_{N_T\vert N_t}(k) \bOne_{\{k\in B\}}\\
    &= f_{W_T\vert W_t}(a)  \sum_{k = 0}^b p_{N_T\vert N_t}(k) \\
    &= \frac{\exp\left(-\frac{(a-W_t)^2}{2(T-t)}\right)}{\sqrt{2\pi(T-t)}} \sum_{k = 0}^{b-N_t} e^{-\Lambda(t,T)}\frac{(\Lambda(t,T))^k}{k!}\bOne_{\{N_t\leq b\}}
\end{align*}
Finally, we deduce the PRP via Clark-Ocone formula,
\begin{align*}
&\bOne_{\{W_T\leq a\}\times\{N_T\leq b\}} = \PP(W_T\leq a)\PP(N_T\leq b) \\
&+ \int_0^T \left(\frac{\exp\left(-\frac{(a-W_t)^2}{2(T-t)}\right)}{\sqrt{2\pi(T-t)}}\right) \left(\sum_{k = 0}^{b-N_t} e^{-\Lambda(t,T)}\frac{(\Lambda(t,T))^k}{k!} \right)  dW_t\\
&-\int_0^T\left(\int_{-\infty}^{a-W_t}\frac{\exp\left(-\frac{(x-a+W_t)^2}{2(T-t)}\right)}{\sqrt{2\pi(T-t)}} dx\right) \left( e^{-\Lambda(t,T)}\frac{(\Lambda(t,T))^{b-N_t}}{(b-N_t)!} \bOne_{\{N_t\leq b\}}\right)d\tilde N_t
\end{align*}
from which the processes $\alpha^G=\prT{\alpha^G}$ and $\gamma^G=\prT{\gamma^G}$ appearing in Theorem \ref{alpha.binary.compound} are determined.
\end{example}
\begin{example}\label{Example.4}
Let's define 
\begin{equation}
    M_{s,t}:=\sup_{s\leq u \leq t} W_u\ ,\quad J_{s,t}:=\sup_{s\leq u \leq t} \tilde N_u\ ,
\end{equation}
and $M_t:= M_{0,t}$ and $J_t:= J_{0,t}$. 
To short the notation, we define the intervals \hbox{$A:=(a_1,a_2]$} and $B=(b_1,b_2]$.
We consider the following example
\begin{equation}
    G = \bOne_{\{M_T\in A\}\times \{J_T\in B\}} = \bOne_{\{M_T\in A\}}\bOne_{\{J_T\in B\}}
\end{equation}
and we proceed as before.
\begin{align*}
    D_t G &= \bOne_{\{J_T\in B\}} D_t \bOne_{\{M_T\in A\}} = \bOne_{\{J_T\in B\}} D_t \left(\bOne_{\{M_T\leq a_2\}} - \bOne_{\{M_T \leq a_1\}}\right) \\
    &= \bOne_{\{J_T\in B\}} \bOne_{\{M_t\leq M_{t,T}\}}\left(-\delta_{a_2}(M_T) + \delta_{a_1}(M_T)\right) 
    \ ,
\end{align*}
we refer to \cite{Bermin02} for a detailed explanation of the Malliavin derivative of the running maximum $M_T$.
We consider the conditional expectation of the 
$D_t G$ after splitting thanks to the independence.
Then,
\begin{align}\label{ex.max.alpha1}
    \EE\left[D_t \bOne_{\{M_T\in A\}} \vert \cF_t\right] &= \EE\left[\bOne_{\{M_t\leq M_{t,T}\}}\left(-\delta_{a_2}(M_{t,T}) + \delta_{a_1}(M_{t,T})\right) \vert \cF_t\right]\notag\\
    &= \int_0^{+\infty} \bOne_{\{M_t\leq m\}}\left(-\delta_{a_2}(m) + \delta_{a_1}(m)\right) f_{t}(m) dm\notag\\
    &= \bOne_{\{M_t\leq a_1\}} f_{t}(a_1) - \bOne_{\{M_t\leq a_2\}} f_{t}(a_2)\ ,
\end{align}
being $f_t$ the density of the random variable $M_{t,T}$ given $\cF_t$, which is equivalent to consider the variable $M_{T-t}$ in the domain $(W_t,+\infty)$, i.e.,
$$ f_t(m) =\frac{2 e^{-\frac{(m-W_t)^2}{2(T-t)}}}{ \sqrt{2\pi (T-t)}}\ ,\quad m\geq W_t\ . $$
On the other hand we compute the conditional expectation of the remained Poisson term,
\begin{align*}
    \EE\left[\bOne_{\{J_T\in B\}}\vert \cF_t\right] &=  \PP\left(J_T\in B\vert \cF_t\right) =  \PP\left(\max\{J_t,J_{t,T}\}\in B\vert \cF_t\right)\\
    &= \PP\left(J_t + (J_{t,T}-J_t)^+\in B\vert \cF_t\right)\\
    &= \PP\left((J_{T-t}-b_t)^+\in (b_1-J_t,b_2-J_t] \right)\ ,
\end{align*}
where $b_t:= J_t-\tilde N_t$ and using that $J_{t,T}-\tilde N_t$ is independent of $\cF_t$. 
We aim to compute
\begin{equation*}
     \EE\left[\bOne_{\{J_T\in B\}}\vert \cF_t\right] =  \PP\left((J_{T-t}-b_t)^+ >  b_1-J_t\right) -  \PP\left((J_{T-t}-b_t)^+ > b_2-J_t\right)\ .
\end{equation*}
Each one of the probabilities can be computed as
\begin{align*}
    \PP\left((J_{T-t}-b_t)^+ >  b_1-J_t\right) &= \bOne_{\{b_1 - J_t\leq 0\}} + \bOne_{\{b_1 - J_t > 0\}}\overbar{F}^N_{T-t}(b_1-\tilde N_t) \\
    &= 1+ \bOne_{\{b_1 - J_t > 0\}}\left(\overbar{F}^N_{T-t}(b_1-\tilde N_t)-1\right)\\
    \PP\left((J_{T-t}-b_t)^+ >  b_2-J_t\right) &= \bOne_{\{b_2 - J_t\leq 0\}} + \bOne_{\{b_2 - J_t > 0\}}\overbar{F}^N_{T-t}(b_2-\tilde N_t)\\
    &= 1+ \bOne_{\{b_2 - J_t > 0\}}\left(\overbar{F}^N_{T-t}(b_2-\tilde N_t)-1\right)
\end{align*}
where the survival function is defined as $\overbar{F}^N_{T-t}(x) = \PP(J_{T-t}>x)$ for every $x\geq 0$. 
See~\cite{Kuznetsov10} for an explicit computation of the distribution of the running supremum.
In terms of the distribution function $F^N$ it can be simplified as follows,
\begin{align}\label{ex.max.alpha2}
    \EE\left[\bOne_{\{J_T\in B\}}\vert \cF_t\right] =&  \bOne_{\{b_1 - J_t > 0\}}\left(\overbar{F}^N_{T-t}(b_1-\tilde N_t)-1\right) \\
    &- \bOne_{\{b_2 - J_t > 0\}}\left(\overbar{F}^N_{T-t}(b_2-\tilde N_t)-1\right)\notag\\
    =& \bOne_{\{b_2 - J_t > 0\}}F^N_{T-t}(b_2-\tilde N_t)-\bOne_{\{b_1 - J_t > 0\}}F^N_{T-t}(b_1-\tilde N_t)\ .
\end{align}
Then, taking into account \eqref{ex.max.alpha1} and \eqref{ex.max.alpha2} the process $\alpha^G$ is fully determined.
We proceed in the same way in order to compute $\EE[D_{t,1}G\vert \cF_t]$.
Using the operator~$\Psi$, we can compute the following Malliavin derivative
\begin{equation*}
    D_{t,1} \bOne_{\{J_T\in B\}} =\bOne_{\{\max{\{J_t,1+J_{t,T}\}}\in B\}} - \bOne_{\{J_T\in B\}}
\end{equation*}
where the second term has been calculated in \eqref{ex.max.alpha2}.
For the first one we have
\begin{align*}
    \EE[&\bOne_{\{\max{\{J_t,1+J_{t,T}\}}\in B\}}\vert \cF_t] = \PP(\max{\{J_t,1+J_{t,T}\}}\in B\vert \cF_t) \\
    &= \bOne_{\{b_2 - J_t > 0\}}F^N_{T-t}(b_2-\tilde N_t-1)-\bOne_{\{b_1 - J_t > 0\}}F^N_{T-t}(b_1-\tilde N_t-1)
\end{align*}
where we have omitted some steps as they were similar to ones shown before.
Finally
\begin{equation}\label{ex.max.gamma1}
    \EE\left[\bOne_{\{M_T\in A\}}\vert \cF_t\right] =
    \bOne_{\{a_2 - M_t > 0\}}F^W_{T-t}(a_2- W_t)-\bOne_{\{a_1 - M_t > 0\}}F^W_{T-t}(a_1- W_t)\ ,
\end{equation}
where in this case $\overbar{F}_t^W(y) = 2(1-\Phi(y/\sqrt{t}))$ and again the process $\gamma^G$ is determined.
\end{example}
\section*{Conclusion}
In this paper we show how to incorporate anticipative information 
in a filtration generated by a Brownian motion and a Poisson process.
We compute the compensators in a general framework of additional information (see Lemma~\ref{mixed.compens}), 
and then we focus on the binary case to consider more explicit examples (see Theorem~\ref{alpha.binary.compound}).
In particular, 
we study the case in which a $\bG$-agent knows if the final pair of random variables $(W_T,N_T)$ 
are within a certain rectangular region, as well as the case that considers a similar type of information about the pair of running maximums~$(M_T,J_T)$, see Examples~\ref{Example.3} and~\ref{Example.4}.

When the dynamics of the risky asset dynamics are driven by the Poisson process only, 
we give the exact value of the additional information in terms of an entropy similarly to the corresponding continuous case, see Theorem~\ref{theo.gain.pois} and compare it with~\cite{AmendingerImkellerSchweizer1998}.

\bibliography{main.bib} 

\section*{Acknowledgments}
This research was partially supported by the Spanish \emph{Ministerio de Economía y Competitividad} grant
PID2020-116694GB-I00.
The first author acknowledges financial support by the Community of Madrid  within the framework of the multi-year agreement with the Carlos III Madrid University in its line of action ``\emph{Excelencia para el Profesorado Universitario}'' (V Plan Regional de Investigación Científica e Innovación Tecnológica 2016-2020).
The second author acknowledges financial support by an FPU Grant (FPU18/01101) of Spanish \emph{Ministerio de Ciencia, Innovaci\'{o}n y Universidades}.

\end{document}